\newcommand{\R}{\mathbb{R}}
\DeclareMathOperator{\co}{co}
\renewcommand{\geq}{\geqslant}
\renewcommand{\leq}{\leqslant}
\newtheorem{theorem}{Theorem}[section]
\newtheorem{lemma}[theorem]{Lemma}
\newtheorem{proposition}[theorem]{Proposition}
\theoremstyle{definition}
\newtheorem{definition}[theorem]{Definition}
\theoremstyle{remark}
\numberwithin{equation}{section}
\def\fnote#1{\footnote}
\def\natu{{\mathbb N}}
\def\ignora#1{}
\def\n3#1{\left\vert  \! \left\vert \! \left\vert \, #1 \, \right\vert \!
  \right\vert \! \right\vert }
\begin{document}

\title{Banach spaces containing $c_0$ and elements in the fourth dual}

\author[Avil\'es]{Antonio Avil\'es}
\address[Avil\'es]{Universidad de Murcia, Departamento de Matem\'{a}ticas, Campus de Espinardo 30100 Murcia, Spain
	\newline
	\href{https://orcid.org/0000-0003-0291-3113}{ORCID: \texttt{0000-0003-0291-3113} } }
\email{\texttt{avileslo@um.es}}

\author[Mart\'inez-Cervantes]{Gonzalo Mart\'inez-Cervantes}
\address[Mart\'inez-Cervantes]{Universidad de Alicante, Departamento de Matem\'{a}ticas, Facultad de Ciencias, 03080 Alicante, Spain
	\newline
	\href{http://orcid.org/0000-0002-5927-5215}{ORCID: \texttt{0000-0002-5927-5215} } }	

\email{gonzalo.martinez@ua.es}

\author[Rueda Zoca]{Abraham Rueda Zoca}
\address[Rueda Zoca]{Universidad de Murcia, Departamento de Matem\'{a}ticas, Campus de Espinardo 30100 Murcia, Spain
	\newline
	\href{https://orcid.org/0000-0003-0718-1353}{ORCID: \texttt{0000-0003-0718-1353} }}
\email{\texttt{abraham.rueda@um.es}}
\urladdr{\url{https://arzenglish.wordpress.com}}

\thanks{The three authors have been supported by MTM2017-86182-P (Government of Spain, AEI/FEDER, EU) and Fundaci\'on S\'eneca, ACyT Regi\'on de Murcia grant 20797/PI/18. G. Mart\'{\i}nez-Cervantes has been co-financed by the European Social Fund (ESF) and the Youth European Initiative (YEI) under the Spanish S\'eneca Foundation (CARM) (ref. 21319/PDGI/19). A. Rueda Zoca was supported by Juan de la Cierva-Formaci\'on fellowship FJC2019-039973, by MICINN (Spain) Grant PGC2018-093794-B-I00 (MCIU, AEI, FEDER, UE), and by Junta de Andaluc\'ia Grants A-FQM-484-UGR18 and FQM-0185.}

\keywords{ASQ Banach spaces; selective ultrafilter; Sequentially ASQ spaces; fourth dual}

\subjclass[2020]{Primary 46B20, 46B04, 03E35; Secondary 46B26, 54A20}

\begin{abstract} 
A recent result of T.~Abrahamsen, P.~H\'ajek and S.~Troyanski states that a separable Banach space is almost square if and only if there exists $h\in S_{X^{****}}$ such that $\|x+h\|=\max\{\|x\|,1\}$ for all $x\in X$. The proof passes through a sequential version of being almost square which we call being \textit{sequentially almost square}. In this article we study these conditions in the nonseparable setting. On one hand, we show that a Banach space $X$ contains a copy of $c_0$ if and only if there exists an equivalent renorming  $| \cdot |$ on $X$ for which there exists $h\in S_{X^{****}}$ such that $|x+h|=\max\{|x|,1\}$ for every $x\in X$.
On the other hand, although it is unclear whether the aforementioned result of T.~Abrahamsen et al. holds in the nonseparable setting, 
we show that, under the existence of selective ultrafilters, if $X$ is a sequentially almost square Banach space then there exists $h\in S_{X^{****}}$ such that $\|x+h\|=\max\{\|x\|,1\}$ for all $x\in X$.
\end{abstract}

\maketitle

\section{Introduction}

The study of sequences equivalent to the $\ell_1$ or the $c_0$ basis has been a central topic in Banach space theory. In his celebrated paper \cite{maurey}, B.~Maurey characterised those Banach spaces $X$ containing $\ell_1$ or $c_0$ in terms of the presence of particular elements in succesive duals of $X$. Namely, he proved that a separable Banach space $X$ contains an isomorphic copy of $\ell_1$ if, and only if, there exists a non-zero element $x^{**}\in X^{**}$ so that
$$\Vert x+x^{**}\Vert=\Vert x-x^{**}\Vert$$
holds for every $x\in X$. In the same paper, he proved that a separable Banach space $X$ contains $c_0$ if, and only if, there are non-zero elements $x_1\in X^{**}, x_2\in X^{(4)}, x_3\in X^{(6)}$ and $x_4\in X^{(8)}$ so that
$$\Vert x+x_1-x_2\Vert=\Vert x-x_1+x_2\Vert=\Vert x+x_1-x_2+x_3-x_4\Vert$$
holds for every $x\in X$, where by $X^{(n)}$ we denote the $n$th-dual space of $X$. By the end of the eighties, the previous characterisation for spaces containing $\ell_1$ was put further by G. Godefroy in \cite[Theorem II.4]{god} in the following sense

\begin{theorem}\label{theo:godefroy}
 Given a Banach space $X$, the following are equivalent:
\begin{enumerate}
\item $X$ contains an isomorphic copy of $\ell_1$.
\item\label{condi-Lorto} There exists an equivalent renorming $\vert\cdot\vert$ on $X$ with a non-zero element $x^{**}\in X^{**}$ so that
$$\vert x+x^{**}\vert=\vert x\vert+\vert x^{**}\vert$$
holds for every $x\in X$.
\item\label{condi-octah} There exists an equivalent renorming $\vert\cdot\vert$ on $X$ so that, for every finite-dimensional subspace $F$ of $X$ and every $\varepsilon>0$, there exists $x\in S_X$ so that
$$\vert y+\lambda x\vert\geq (1-\varepsilon)(\vert y\vert+\vert\lambda\vert)$$
holds for every $y\in F$ and every $\lambda\in\mathbb R$.
\end{enumerate}
\end{theorem}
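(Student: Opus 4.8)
The plan is to read condition (3) as the assertion that the renormed space $(X,|\cdot|)$ is \emph{octahedral}, to treat (3) as a hub, and to prove the two equivalences $(2)\Leftrightarrow(3)$ and $(1)\Leftrightarrow(3)$. Since the triangle inequality already gives $|x+x^{**}|\le|x|+|x^{**}|$, the entire content of (2) is the reverse inequality, so everywhere the work is to produce \emph{lower} bounds on norms.

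For $(2)\Rightarrow(3)$ I would use the principle of local reflexivity. Normalising so that $|x^{**}|=1$ and noting that the hypothesis upgrades by homogeneity to $|y+\lambda x^{**}|=|y|+|\lambda|$ for all $y\in X$, $\lambda\in\mathbb R$, I fix a finite-dimensional $F\subseteq X$ and $\varepsilon>0$ and apply local reflexivity to $G=\operatorname{span}(F\cup\{x^{**}\})\subseteq X^{**}$. This yields $T\colon G\to X$ that is the identity on $F$, a $(1+\varepsilon)$-isomorphism onto its image, and sends $x^{**}$ to some $x\in X$ with $|x|\approx1$; transporting $|y+\lambda x^{**}|=|y|+|\lambda|$ through $T$ and normalising $x$ gives exactly the octahedral inequality. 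For the converse $(3)\Rightarrow(2)$ the right tool is weak-star compactness, not a naive limit of witnessing vectors (the bidual norm is only weak-star lower semicontinuous, which points the wrong way). For finite-dimensional $F\subseteq X$, $R>0$ and $\delta>0$ I set
\[
A_{F,R,\delta}=\left\{z\in B_{X^{**}} : |x+z|\ge |x|+1-\delta \text{ for all } x\in F \text{ with } |x|\le R\right\}.
\]
Each $A_{F,R,\delta}$ is weak-star closed in the weak-star compact ball $B_{X^{**}}$, and it is nonempty: applying (3) to $F$ with parameter $\delta/(R+1)$ produces $x_0\in S_X$ that already lies in $A_{F,R,\delta}$. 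These sets are downward directed, so the finite intersection property holds and $\bigcap_{F,R,\delta}A_{F,R,\delta}\neq\emptyset$; any $x^{**}$ in the intersection satisfies $|x+x^{**}|\ge|x|+1$ for every $x\in X$, and taking $x=0$ forces $|x^{**}|=1$, which is (2).

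The implication $(3)\Rightarrow(1)$ is the cleanest step, a telescoping induction. Choosing $\varepsilon_n>0$ with $\prod_n(1-\varepsilon_n)\ge\tfrac12$, pick $x_1\in S_X$ and, having chosen $x_1,\dots,x_n$ spanning $F_n$, use (3) with $(F_n,\varepsilon_n)$ to select $x_{n+1}\in S_X$ with $|y+\lambda x_{n+1}|\ge(1-\varepsilon_n)(|y|+|\lambda|)$ for all $y\in F_n$. Peeling off the coordinates one at a time gives $\left|\sum_{i=1}^N a_i x_i\right|\ge\prod_i(1-\varepsilon_i)\sum_{i=1}^N|a_i|\ge\tfrac12\sum_{i=1}^N|a_i|$, while $\left|\sum a_i x_i\right|\le\sum|a_i|$ since each $|x_i|=1$; hence $(x_n)$ is equivalent to the unit vector basis of $\ell_1$ and $X$ contains $\ell_1$.

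The genuinely hard direction is $(1)\Rightarrow(3)$: manufacturing an equivalent \emph{octahedral} norm on all of $X$ out of an isomorphic copy of $\ell_1$. After a standard reduction I may assume a normalised sequence $(y_n)$ with $\left\|\sum a_i y_i\right\|\ge(1-\varepsilon)\sum|a_i|$, and the natural candidate for the $L$-orthogonal element is a weak-star cluster point $y^{**}$ of $(y_n)$. The essential difficulty is that $y^{**}$ is \emph{not} $L$-orthogonal for the original norm — weak-star limits make norms drop, whereas $L$-orthogonality requires them to add — so the copy of $\ell_1$ does not by itself yield (2) or (3), and a genuine renorming is unavoidable. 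The plan is to exploit that the biorthogonal functionals of $(y_n)$ form a $c_0$-type system in $X^*$ and to renorm $X$ (equivalently, to enlarge $B_{X^*}$ in a coordinated way along this system) so that the escaping $\ell_1$-mass carried by $y^{**}$ adds isometrically to every $x\in X$. I expect the main obstacle to be exactly this step: controlling the new norm \emph{globally} on $X$ rather than merely on the span of $(y_n)$, while checking that it stays equivalent to the original norm and still forces the additive behaviour in the bidual. Once the renorming is in place, $(2)\Leftrightarrow(3)$ closes the cycle.
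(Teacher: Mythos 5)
First, a point of reference: the paper does not prove this theorem at all; it quotes it from Godefroy \cite{god} (Theorem II.4 there), so your attempt can only be judged on its own merits. Two of your four arrows are correct and standard: $(2)\Rightarrow(3)$ via the principle of local reflexivity (or, alternatively, via Goldstine applied to functionals norming $y_i\pm x^{**}$) and $(3)\Rightarrow(1)$ via the telescoping induction. The problems are in the other two arrows.

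The step $(3)\Rightarrow(2)$ fails concretely at the claim that $A_{F,R,\delta}$ is weak-star closed. The map $z\mapsto |x+z|$ on $X^{**}$ is a supremum of weak-star continuous functionals, hence weak-star \emph{lower} semicontinuous; lower semicontinuity makes \emph{sublevel} sets closed, whereas $A_{F,R,\delta}$ is an intersection of \emph{superlevel} sets $\{z: |x+z|\ge c\}$, which need not be closed. For a counterexample, take $X=c_0$, so $X^{**}=\ell_\infty$: the unit vectors satisfy $e_n\to 0$ weak-star while $\|e_n\|=1$, so $\{z\in B_{\ell_\infty}:\|z\|\ge \tfrac12\}$ is not weak-star closed. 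You correctly observed that lower semicontinuity ``points the wrong way'' for limits of witnessing vectors, but your compactness argument relies on exactly the same wrong direction, only repackaged. This is not a repairable slip: what you are trying to prove is that an octahedral norm \emph{itself} (with no further renorming) admits an $L$-orthogonal element in the bidual. That statement is true for separable spaces (Godefroy), but for general Banach spaces it is precisely the delicate question studied in \cite{amcrz}, where it is obtained only under additional set-theoretic hypotheses (selective ultrafilters) --- indeed, the present paper's Theorem \ref{theo:selectivo} is the $c_0$-analogue of that phenomenon. So no soft weak-star compactness argument can settle it. The good news is that the theorem does not require this strong same-norm implication: since both (2) and (3) allow changing the norm, the cycle $(2)\Rightarrow(3)\Rightarrow(1)\Rightarrow(2)$ suffices, making your arrow $(3)\Rightarrow(2)$ unnecessary.

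That, however, exposes the second and larger gap: the essential implication $(1)\Rightarrow(3)$ (or $(1)\Rightarrow(2)$) is not proved; it is only a plan, and you yourself flag the unresolved obstacle (controlling the new norm globally on $X$ while keeping it equivalent and forcing additivity in the bidual). This construction is the actual content of Godefroy's theorem: starting from an $\ell_1$-sequence $(y_n)$, one must build an explicit equivalent norm under which a weak-star cluster point of $(y_n)$ becomes $L$-orthogonal --- as you note, a cluster point is useless for the original norm. The $c_0$-counterpart of exactly this step is what Section 4 of the present paper carries out (Theorem \ref{theo:renorming}): there the equivalent norm of \cite{blr} is written down by an explicit formula involving an ultrafilter limit along the copy of $c_0$, and the max-behaviour is then verified against every $x\in X$, not just on the span of the basic sequence. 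Without an analogous explicit construction on the $\ell_1$ side, your argument proves only $(2)\Leftrightarrow(3)\Rightarrow(1)$ modulo the flawed arrow, and the theorem remains unproven.
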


Note that condition (\ref{condi-Lorto}) is weaker than the original Maurey condition in the sense that it requires an equivalent renorming but, on the other hand, the element of the bidual in Godefroy theorem has a rather stronger $\ell_1$ flavour.

New properties have been introduced and studied where the role of $\ell_1$ and norms obtained by addition is replaced by $c_0$ and norms obtained by taking a maximum. In this setting, the analogue of (\ref{condi-octah}) was defined by Abrahamsen, Langemets and Lima \cite{all}: a Banach space $X$ is said to be \textit{almost square (ASQ)} if for every $\varepsilon>0$ and every $x_1, \ldots, x_n \in S_X$ there exists $h\in S_X$ such that $\|x_i \pm h\| \leq 1+\varepsilon$ for every $i=1,\ldots,n$. Becerra, López-Pérez and Rueda Zoca proved the following:

\begin{proposition}\label{Propc0implicaASQ}	{\cite[Theorem 2.3]{blr}}
	A Banach space $X$ contains a copy of $c_0$ if and only if $X$ admits an equivalent norm that is ASQ.
\end{proposition}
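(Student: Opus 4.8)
The plan is to prove the two implications separately. The reverse implication — that an ASQ renorming forces a copy of $c_0$ — can be settled completely and cleanly, so I would make it the technical centerpiece; the direct implication reduces to a renorming that is routine when $c_0$ is complemented but genuinely delicate in general. For the reverse implication, assume the given norm is ASQ and build a normalized sequence $(h_n)\subseteq S_X$ spanning a copy of $c_0$. First I would construct $(h_n)$ by induction so as to force the one-sided estimate $\|\sum_i a_i h_i\|\le C\max_i|a_i|$. At step $n+1$, having $h_1,\dots,h_n$ and writing $V_n=\operatorname{span}\{h_1,\dots,h_n\}$, I choose a finite $\delta_n$-net of the unit sphere of $V_n$ and apply the ASQ property to this finite set with a parameter $\varepsilon_n$, obtaining $h_{n+1}\in S_X$ with $\|v\pm h_{n+1}\|\le 1+\varepsilon_n$ for every net point $v$. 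A net approximation, together with the convexity trick that writes $v+th_{n+1}$ (for $|t|\le 1$) as a convex combination of $v$ and $v\pm h_{n+1}$, upgrades this to the one-step $\ell_\infty$-estimate $\|w+sh_{n+1}\|\le(1+\eta_n)\max(\|w\|,|s|)$ valid for all $w\in V_n$ and all scalars $s$, where $\eta_n=\varepsilon_n+\delta_n$. Iterating with $\sum_n\eta_n<\infty$ yields $\|\sum_i a_i h_i\|\le C\max_i|a_i|$ with $C=\prod_n(1+\eta_n)$.

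The key point is that this upper estimate alone already pins down a copy of $c_0$. Indeed the operator $T\colon c_0\to X$, $Te_n=h_n$, is well defined and bounded, and since the unit vector basis of $c_0$ is weakly null and $T$ is weak-to-weak continuous, $(h_n)=(Te_n)$ is weakly null in $X$ while remaining normalized. By the Bessaga--Pe\l czy\'nski selection principle it has a basic subsequence $(h_{n_k})$; this subsequence keeps the upper estimate $\|\sum_k b_k h_{n_k}\|\le C\max_k|b_k|$, while the basis-constant inequality $|b_k|=\|S_k-S_{k-1}\|\le 2K\|\sum_j b_j h_{n_j}\|$ (with $S_k$ the partial sums and $K$ the basis constant) supplies the matching lower estimate. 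Hence $(h_{n_k})$ is equivalent to the $c_0$-basis and $\overline{\operatorname{span}}\{h_{n_k}:k\in\mathbb{N}\}$ is the desired copy of $c_0$.

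For the converse, ``$c_0\hookrightarrow X\Rightarrow$ ASQ renorming'', the transparent case is when the copy $Y\cong c_0$ is complemented, $X=Y\oplus Z$: one renorms by the $\ell_\infty$-sum $|(y,z)|=\max(\|y\|_{c_0},\|z\|_Z)$ and takes as witnesses the deep basis vectors $h_m=(e_m,0)$. Then $|h_m|=1$, and whenever $|(y,z)|\le 1$ one has $|(y,z)\pm h_m|=\max(\|y\pm e_m\|_{c_0},\|z\|)\le 1+\varepsilon$ for large $m$, since the $m$-th coordinate of the fixed vector $y\in c_0$ is eventually small. The same argument runs through any bounded operator $T\colon X\to c_0$ that is an isomorphism on $Y$, via the renorming $|x|=\max(\lambda\|x\|,\|Tx\|_\infty)$ with $\lambda$ small.

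The hard part will be the general, non-complemented and genuinely nonseparable situation — the prototype being $c_0\subseteq\ell_\infty$ — where none of the above is available. Here no bounded operator $T\colon\ell_\infty\to c_0$ can be an isomorphism on $Y$: since $\ell_\infty$ has the Grothendieck property every such operator is weakly compact, and since $\ell_\infty$ has the Dunford--Pettis property it is then completely continuous, so it sends the weakly null basis to a norm-null sequence, which is incompatible with being bounded below on $Y$. Consequently the ASQ renorming cannot be manufactured from a quotient map or from a supremum of functionals, and must instead be built \emph{primally} from a normalized weakly null $c_0$-sequence $(e_n)$ — which, after James' non-distortion, may be taken almost isometric — by flattening the unit ball in the directions of the far tails $\overline{\operatorname{span}}\{e_n:n\ge N\}$ so that deep sign-combinations of the $e_n$ become admissible witnesses against \emph{every} $x\in X$. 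The genuine difficulty, and where I expect the nonseparable obstruction to concentrate, is to carry out this flattening while simultaneously keeping the norm equivalent to the original one and keeping the deep witnesses exactly on the unit sphere.
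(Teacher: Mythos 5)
Your reverse implication (an equivalent ASQ norm forces a copy of $c_0$) is correct: the net-plus-convexity induction gives the upper $c_0$-estimate, the bounded operator $T\colon c_0\to X$ makes $(h_n)$ weakly null, and Bessaga--Pe\l czy\'nski together with the basis-constant inequality supplies the matching lower estimate. But this is not where the content of the proposition lies, and the forward implication has a genuine gap which you in effect concede: your construction covers only the case where the copy $Y\cong c_0$ is complemented in $X$ (or admits an operator $X\to c_0$ bounded below on $Y$), you correctly show via the Grothendieck and Dunford--Pettis properties that both hypotheses fail already for $c_0\subseteq\ell_\infty$, and then the argument stops at a description of the desired ``flattening'' rather than a construction of it.

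The missing idea --- this is how \cite[Theorem 2.3]{blr} proceeds, and the construction is recalled in Section 4 of the present paper --- is to get the complementation in the bidual rather than in $X$ itself: after renorming so that $(e_n)$ is isometric to the $c_0$-basis, the weak$^*$ closure of $Y=\overline{\spann}\{e_n\}$ in $X^{**}$ is an isometric copy of $\ell_\infty$, which is injective, so $X^{**}=\ell_\infty\oplus Z$ \emph{always} holds (cf.\ \cite[Proposition 5.13]{fab}). Writing each $x\in X\subseteq X^{**}$ as $x=(y,z)$ and fixing a free ultrafilter $\mathcal U$ on $\mathbb N$, one declares
$$|x|:=\max\Bigl\{\bigl|\lim_{\mathcal U}y\bigr|,\ \sup_{n\in\mathbb N}\bigl|y(n)-\lim_{\mathcal U}y\bigr|,\ \Vert z\Vert\Bigr\}.$$
This is an equivalent norm under which $(e_n)$ remains isometrically the $c_0$-basis, and deep basis vectors serve as ASQ witnesses: given unit vectors $x_i=(y_i,z_i)$, $1\leq i\leq k$, the set $\{m:\ |y_i(m)-\lim_{\mathcal U}y_i|<\varepsilon\ \text{for all } i\}$ belongs to $\mathcal U$, hence is nonempty, and for $m$ in this set one checks coordinatewise that $|x_i\pm e_m|\leq\max\{|x_i|,1+\varepsilon\}$, since $\lim_{\mathcal U}(y_i\pm e_m)=\lim_{\mathcal U}y_i$ and the only perturbed coordinate is the $m$-th, where $|y_i(m)\pm 1-\lim_{\mathcal U}y_i|\leq 1+\varepsilon$. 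The recentering term $y(n)-\lim_{\mathcal U}y$ is precisely the flattening you were seeking: the coordinates of a fixed $x\in X$ need not vanish at infinity (this is your $\ell_\infty$ obstruction), but along $\mathcal U$ they cluster at a value which the norm has subtracted off, so your complemented-case argument runs in $X^{**}$ with a $\mathcal U$-large set of admissible witnesses, no projection onto $Y$ inside $X$ being needed.
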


A $c_0$-analogous condition for (\ref{condi-Lorto}) was given by Farmaki \cite{Far}: There exists $h\in S_{X^{****}}$ so that, for every $x\in X$, the following equation holds
\begin{equation}\label{equation:ortofarmaki}\Vert x+h\Vert=\max\{\Vert x\Vert, 1\}.
\end{equation}
One needs to get to the fourth dual in this case. If one thinks of $X=c_0$, it is impossible to find such $h$ in $\ell_\infty$ but a limit of the canonical basis through a free ultrafilter in the weak$^*$ topology of $\ell_\infty^{**}$ does the job. It is easy to see that Farmaki's condition implies ASQ, while T.~Abrahamsen, P.~H\'ajek and S.~Troyanski showed that the converse holds in separable Banach spaces.

\begin{proposition}\label{PropEquivalencesSeparable}	{\cite[Proposition 4.2]{AHT}}
	For a separable Banach space $X$, the following are equivalent:
	\begin{enumerate}
		\item $X$ is ASQ;
		\item There exists $h\in S_{X^{****}}$ such that $\|x+h\|=\max\{\|x\|,1\}$ for all $x\in X$.
	\end{enumerate}
\end{proposition}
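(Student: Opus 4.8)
The plan is to prove the two implications separately, with essentially all the difficulty concentrated in $(1)\Rightarrow(2)$; the reverse is a soft descent argument and, notably, needs no separability. For $(2)\Rightarrow(1)$ I would fix $x_1,\dots,x_n\in S_X$ and $\varepsilon>0$ and observe that the hypothesis gives $\|x_i\pm h\|=\max\{\|x_i\|,1\}=1$ for every $i$, so that the finite configuration $\{x_1,\dots,x_n,h\}$ spans an almost-$\oplus_\infty$ pattern inside $X^{****}$. I would then transport this pattern back to $X$ by applying the principle of local reflexivity twice: first to the pair $X^{**}\subseteq X^{****}=(X^{**})^{**}$, obtaining $g\in X^{**}$ with $\|g\|\approx 1$ and $\|x_i\pm g\|\le 1+\varepsilon$ while leaving the vectors $x_i\in X\subseteq X^{**}$ fixed; then to the pair $X\subseteq X^{**}$, obtaining $y\in X$ with the same estimates, again fixing the $x_i$. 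After normalising $y$ this produces the ASQ witness.

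For the hard direction $(1)\Rightarrow(2)$ I would proceed in three steps. \emph{Step 1 (ASQ to a single sequence).} Using separability and a diagonal argument over a countable dense subset of $X$, I would upgrade ASQ into a \emph{sequentially almost square} statement: a normalised, weakly null sequence $(u_n)\subseteq S_X$ whose partial sums $S_N=\sum_{k\le N}u_k$ stay bounded (so $\sum u_n$ is weakly unconditionally Cauchy and $(u_n)$ is genuinely a $c_0$-sequence), which behaves asymptotically isometrically with respect to the whole space, and which admits norming functionals $\psi_n\in S_{X^*}$ with $\psi_n\to 0$ weak$^*$ and $\psi_n(u_n)\to 1$. \emph{Step 2 (lifting to the fourth dual).} Since $\sum u_n$ is weakly unconditionally Cauchy, the partial sums converge weak$^*$ to some $S_\infty\in X^{**}$; I set $t_n=S_\infty-S_{n-1}\in X^{**}$ (the ``tails''), so that $t_n-t_{n+1}=u_n$ and $t_n\to 0$ in $\sigma(X^{**},X^*)$. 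I then define $h$ as a weak$^*$ cluster point of $(t_n)$ in $X^{****}=(X^{***})^*$, along $\sigma(X^{****},X^{***})$. The reason for passing through the tails is that, although $(u_n)$ and $(t_n)$ both converge weak$^*$ to $0$ \emph{inside} $X^{**}$, the sequence $(t_n)$ need not converge to $0$ against the larger predual $X^{***}$, and this is exactly what yields a genuine element of the fourth dual.

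\emph{Step 3 (the norm identity).} The upper bound $\|x+h\|\le\max\{\|x\|,1\}$ follows from weak$^*$ lower semicontinuity of the norm together with $\lim_n\|x+t_n\|_{X^{**}}=\max\{\|x\|,1\}$ from Step 1. For the lower bound I would use the decomposition $X^{***}=X^*\oplus X^{\perp}$, where $X^{\perp}$ is the annihilator of $X$. Testing against a norming functional $\phi\in S_{X^*}$ of $x$ gives $\langle x+h,\phi\rangle=\|x\|+\lim_n\langle t_n,\phi\rangle=\|x\|$, since $t_n\to 0$ against $X^*$; hence $\|x+h\|\ge\|x\|$. Testing against the functional $\Psi\in X^{\perp}$ obtained as a weak$^*$ cluster point of $(\psi_n)$ gives $\langle x+h,\Psi\rangle=\langle h,\Psi\rangle$, and the approximate biorthogonality $\psi_m(u_k)\approx\delta_{mk}$ turns the tail-sums into $\langle t_n,\Psi\rangle\to 1$, so $\|x+h\|\ge 1$. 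Taking $x=0$ also gives $\|h\|=1$, completing the identity.

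The main obstacle is precisely the lower bound $\|x+h\|\ge 1$ in Step 3: one must manufacture a fourth-dual element that genuinely ``sees'' the $c_0$-structure, i.e. that pairs nontrivially with $X^{\perp}\subseteq X^{***}$. A naive weak$^*$ limit of the unit vectors $(u_n)$ collapses to $0$ at every level and is useless, since both iterated limits $\lim_m\lim_n\psi_m(u_n)$ and $\lim_n\lim_m\psi_m(u_n)$ vanish while the diagonal $\psi_n(u_n)\to 1$ survives; the resolution is to exploit the bounded partial sums and replace $(u_n)$ by its tails $(t_n)$, so that a single weak$^*$ limit captures that diagonal. Separability is what makes Step 1 realisable with one sequence, and hence Steps 2--3 workable with ordinary weak$^*$ cluster points, rather than with the nets and selective ultrafilters required in the nonseparable case.
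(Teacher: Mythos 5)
Your proposal is correct, but it takes a genuinely different route from the one the paper relies on. The paper does not reprove this proposition: it quotes \cite{AHT}, whose argument shares your Step 1 (separable ASQ implies sequentially ASQ, plus an extraction of the kind carried out in Lemma \ref{LemmaASQSubsequence}) but then invokes Farmaki's fourth-dual construction \cite{Far}; and when the present paper needs conclusions of this type for its own Theorems \ref{theo:renorming} and \ref{theo:selectivo}, it uses a different mechanism, namely Lemma \ref{lem:orthogonalfourthdual}: a family of \emph{convex} sets $C_A\subseteq X^{**}$ built from partial sums, with the finite intersection property, where the lower bound $\Vert x+x^{**}\Vert\geq(1-\varepsilon)\max\{\Vert x\Vert,1\}$ holds for \emph{every} element of $C_A$ (each convex combination keeps a coefficient equal to $1$ at a common index) and survives the passage to weak$^*$ closures via Hahn--Banach separation. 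You instead obtain the lower bound by exhibiting an explicit norming object: the tails $t_n$ of the weakly unconditionally Cauchy series and an annihilator functional $\Psi\in X^{\perp}$ obtained as a cluster point of the $\psi_m$. Both devices address the same obstruction --- weak$^*$ lower semicontinuity of the norm only yields upper bounds --- and yours is arguably more self-contained (no appeal to Farmaki and no convexity trick), while the paper's convex-set formulation is precisely what survives in the nonseparable/ultrafilter setting, where a single sequence of tails is unavailable.

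Two assertions in your outline deserve explicit justification, since they carry the whole lower bound; fortunately both are true and short. First, the existence of \emph{weak$^*$-null} norming functionals: if $\psi_n\in S_{X^*}$ satisfies $\psi_n(u_n)=1$, then for every $x\in B_X$ one has $\pm\psi_n(x)=\psi_n(\pm x+u_n)-1\leq\Vert \pm x+u_n\Vert-1\longrightarrow\max\{\Vert x\Vert,1\}-1=0$, where the convergence uses that $(u_n)$ witnesses sequential ASQ for every $x\in X$; hence $\psi_n\to 0$ in $\sigma(X^*,X)$ automatically, and any $\sigma(X^{***},X^{**})$-cluster point $\Psi$ of $(\psi_n)$ lies in $B_{X^{***}}\cap X^{\perp}$, exactly as your Step 3 requires. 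Second, the approximate biorthogonality: the extracted $c_0$-behaviour gives $\sum_{k\geq n}\vert\psi_m(u_k)\vert\leq 1+\varepsilon_n$ for all $m$, so once $\psi_m(u_m)$ is close to $1$ the off-diagonal sum $\sum_{k\geq n,\,k\neq m}\vert\psi_m(u_k)\vert$ is at most about $\varepsilon_n$, which is what turns $\langle t_n,\psi_m\rangle=\sum_{k\geq n}\psi_m(u_k)$ into a quantity close to $1$ for $m\geq n$ and hence gives $\langle h,\Psi\rangle=1$. With these two points filled in, your Steps 2--3 go through exactly as outlined, and your $(2)\Rightarrow(1)$ via two applications of local reflexivity is the standard argument (and, as you note, needs no separability).
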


One of our main results in this article asserts that a renorming result for this property holds in the nonseparable setting:

\begin{theorem}
	\label{theo:renorming}
	Let $X$ be a Banach space containing an isomorphic copy of $c_0$. Then there exists an equivalent renorming $| \cdot |$ on $X$ for which there exists $h\in S_{X^{****}}$ such that $|x+h|=\max\{|x|,1\}$ for every $x\in X$.
\end{theorem}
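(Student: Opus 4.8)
\emph{Strategy.} The plan is to reduce the statement to a single structural feature of the norm that can be installed by renorming, and then to build $h$ explicitly as a ``point mass at infinity'', imitating the model case $X=c_0$. Recall why the fourth dual is forced in that model: the element realizing \eqref{equation:ortofarmaki} for $c_0$ is the indicator $\mathbbm{1}_{\{p\}}$ of a free ultrafilter $p\in\beta\mathbb{N}\setminus\mathbb{N}$, viewed inside $C(\beta\mathbb{N})^{**}=\ell_\infty^{**}=c_0^{****}$. Indeed, every $x\in c_0$ vanishes at $p$, so
\[
\|x+\mathbbm{1}_{\{p\}}\|=\max\Big(\sup_{t\neq p}|x(t)|,\,|x(p)+1|\Big)=\max(\|x\|,1).
\]
I want to reproduce this picture inside $X$ after a suitable renorming, the point being that the copy of $c_0$ must be made \emph{asymptotically $\ell_\infty$-orthogonal to all of $X$}, not merely to itself.

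\emph{Step 1: the renorming.} Fix a normalized basis $(y_n)$ of a subspace $Y\cong c_0$ of $X$, together with bounded biorthogonal functionals $(y_n^*)\subseteq X^*$. I would produce an equivalent norm $|\cdot|$ for which $(y_n)$ remains a normalized $1$-unconditional $c_0$-basis and which enjoys the decisive property: for every $x\in X$ and every $\varepsilon>0$ there is $N$ such that
\[
\Big|\,x+\sum_{n\in F}\varepsilon_n y_n\,\Big|\le\max(|x|,1)+\varepsilon
\]
for every finite $F\subseteq\{N,N+1,\dots\}$ and every choice of signs $\varepsilon_n\in\{-1,1\}$. A natural way to force this is a max/inf-convolution construction combining the ambient norm with the $c_0$-norm $\sup_n|y_n^*(\cdot)|$ carried by the copy, refining the ASQ renorming underlying Proposition~\ref{Propc0implicaASQ}; the construction must be calibrated so that far-out basis vectors become ``flat directions'' of the unit ball. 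I expect \textbf{this is the main obstacle}: the displayed inequality must hold for \emph{all} $x\in X$ simultaneously (this is what compensates, in the nonseparable setting, for the use of separability in Proposition~\ref{PropEquivalencesSeparable}), and the genuinely delicate case is that of vectors $x$ that ``overlap'' the copy $Y$ — as the example of $\mathbbm 1\in\ell_\infty$ shows, such vectors obstruct orthogonality for the sup norm and only a carefully chosen renorming can tame them.

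\emph{Step 2: the element $h$ and the upper bound.} Let $Z=\overline{\spann}\{y_n\}$, which in the new norm is an isometric copy of $c_0$, and fix a free ultrafilter $p$. Inside $Z^{**}=\ell_\infty$ consider the images $u_A$ of the indicators $\mathbbm{1}_A$ $(A\subseteq\mathbb{N})$, and set $h:=\text{weak}^*\text{-}\lim_{A\to p}u_A\in X^{****}$; equivalently $h$ is the image of the Farmaki point mass $\mathbbm{1}_{\{p\}}\in Z^{****}$ under the isometric embedding $Z^{****}\hookrightarrow X^{****}$, so that $|h|=1$ automatically. The bound $|x+h|\le\max(|x|,1)$ then follows from two applications of weak$^*$ lower semicontinuity of the (dual) norm: first $|x+h|\le\liminf_{A\to p}|x+u_A|_{X^{**}}$, and then $|x+u_A|_{X^{**}}\le\liminf_N|x+\sum_{n\in A\cap[1,N]}y_n|$. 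Choosing $A\in p$ with $\min A\ge N(x,\varepsilon)$ and invoking the property from Step~1, each tail partial sum is estimated by $\max(|x|,1)+\varepsilon$, and letting $\varepsilon\to0$ gives the upper estimate for every $x\in X$.

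\emph{Step 3: the lower bound.} It remains to prove $|x+h|\ge\max(|x|,1)$, which I would obtain by exhibiting, for each fixed $x$, suitable norm-one functionals in $X^{***}$. For the bound $\ge 1$ the role of ``evaluation at $p$'' is played by a weak$^*$ cluster point $\Psi\in X^{***}$, taken along $p$, of the (normalized) biorthogonal functionals; computing against the approximating net one checks $\langle h,\Psi\rangle=1$, while the asymptotic orthogonality of Step~1 is exactly what makes $\Psi$ (nearly) annihilate $x$. For the bound $\ge|x|$ one uses a norming functional of $x$ which, again by the same orthogonality, can be selected so as not to interfere with $h$. Verifying that these functionals can be produced with the correct norms — and that the point-evaluation at $p$ is compatible with a norming functional of $x$ — is the second technical heart of the argument, and it, together with the renorming in Step~1, is where the real work lies; the explicit $c_0$-structure of $(y_n)$ is what allows this to be carried out for an \emph{arbitrary} free ultrafilter, with no additional set-theoretic hypothesis.
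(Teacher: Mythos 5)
Your Steps 2 and 3 are essentially sound (Step 3's lower bound is the same norming-functional trick the paper itself uses), but the whole proposal rests on Step 1, and Step 1 is not a calibration issue that can be deferred: it is, up to minor reformulation, the open problem that this paper explicitly sidesteps. Indeed, suppose some renorming $|\cdot|$ satisfies your Step 1 with $(y_n)$ normalized. Fix $x$ with $|x|\leq 1$ and $\varepsilon>0$, take $N$ as in Step 1, and for $n\geq N$ pick $f_n\in S_{X^*}$ with $f_n(y_n)=1$. From $|x+y_n|\geq 1+f_n(x)$ and $|x-y_n|\geq 1-f_n(x)$ together with your two-sided upper estimate one gets $|f_n(x)|\leq\varepsilon$, hence $|x+y_n|\geq 1-\varepsilon$; the case $|x|>1$ is identical using a norming functional of $x$. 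Thus $|x+y_n|\to\max\{|x|,1\}$ for every $x\in X$, i.e.\ your renormed space would be \emph{sequentially ASQ} witnessed by $(y_n)$. But whether every Banach space containing $c_0$ admits a sequentially ASQ renorming is stated as unknown at the end of the introduction, so your plan proves the theorem only modulo an open problem. Moreover, your suggested route to Step 1 (refining the ASQ renorming behind Proposition \ref{Propc0implicaASQ}) is genuinely obstructed: that norm, which is the one the paper actually uses, is $|x|=\max\{|\lim_{\mathcal U}y|,\sup_n|y(n)-\lim_{\mathcal U}y|,\|z\|\}$ for $x=(y,z)\in\ell_\infty\oplus Z$, and your tail property applied to $F=\{m\}$ with the worse sign forces $\sup_{m\geq N}|y(m)-\lim_{\mathcal U}y|\leq\varepsilon$, i.e.\ it forces the $\ell_\infty$-component of every $x\in B_X$ to converge along $\mathbb N$. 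For $X=\ell_\infty$ this fails no matter how the decomposition is chosen: a bounded operator $T:\ell_\infty\to c$ fixing $c_0$ would, after subtracting limits, yield a bounded projection of $\ell_\infty$ onto $c_0$, contradicting Phillips' theorem.

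This is precisely why the paper takes a different route. Under the ultrafilter norm above, near-orthogonality to a given $x$ holds not along a tail but along the set $A_x=\{m:|y(m)-\lim_{\mathcal U}y|<\varepsilon\}\in\mathcal U$, for $\max$-normalized finite combinations supported inside $A_x$ (Lemma \ref{LemmaConvexBoundsRenorming}). Since these good sets vary with $x$ and are only directed through the ultrafilter, a single Farmaki point mass $h=w^*\text{-}\lim_{A\to p}u_A$ as in your Step 2 is unavailable; instead the paper glues the estimates by compactness: it forms the convex sets $C_A=\co\{y_B^{**}:B\subseteq A,\ B\in\mathcal U\}\subseteq X^{**}$ (with $y_B^{**}$ a weak$^*$ limit of the partial sums $\sum_{i\in B,\,i\leq n}e_i$), verifies the two-sided estimate on $C_{A_x}$, and extracts $h\in\bigcap_{A\in\mathcal U}\overline{C_A}^{w^*}\subseteq X^{****}$ via the finite intersection property and Lemma \ref{lem:orthogonalfourthdual}. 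In short: the paper's key insight is that one only needs, and can actually achieve, orthogonality along ultrafilter sets rather than along tails; your proposal demands the tail version, which is unproven and strictly stronger, so the argument has a genuine gap at its foundation.
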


The proof of Proposition \ref{PropEquivalencesSeparable} passed through another property: We say that a Banach space $X$ is \textit{sequentially ASQ} if there exists a sequence $(x_n)$ in $S_X$ so that
$$
\Vert x+x_n\Vert\longrightarrow \max\{\|x\|,1\}
$$
for every $x\in X$. It is immediate that if $X$ is sequentially ASQ then it is ASQ, while the converse holds for separable Banach spaces \cite[Lemma 4.4]{AHT} (see also the proof of \cite[Corollary 2.4]{gr}). In \cite{AHT} this separable equivalence is used in combination with the work of Farmaki \cite[Proposition 2.10]{Far}. Farmaki's techniques strongly depend on the separability assumption because they rely on a diagonalization argument. Consequently, it is unclear whether ASQ, or even sequential ASQ, implies the existence of a non-zero element satisfying \eqref{equation:ortofarmaki} holds in the nonseparable setting. The other main result of this article is a consistent answer to this question:

\begin{theorem}\label{theo:selectivo}
	Let $X$ be a sequentially ASQ Banach space and let $\mathcal U$ be a selective ultrafilter over $\mathbb N$. Then there exists $h\in S_{X^{****}}$ such that
	$$\Vert x+h\Vert=\max\{\Vert x\Vert,1\}$$
	holds for all $x\in X$.
\end{theorem}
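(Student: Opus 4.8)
The plan is to realize $h$ as a weak$^*$-cluster point, in $X^{****}=(X^{**})^{**}$, of a net of ``infinite sums'' of the sequentially ASQ sequence, using the Ramsey (selectivity) property of $\mathcal U$ twice: once to turn the one-vector estimate $\Vert x\pm x_n\Vert\to\max\{\Vert x\Vert,1\}$ into a block estimate, and once more to handle the nonseparability. Fix $(x_n)\subseteq S_X$ witnessing sequential ASQ; replacing $x$ by $-x$ also gives $\Vert x-x_n\Vert\to\max\{\Vert x\Vert,1\}$, so both signs are controlled. The selectivity of $\mathcal U$ I would use in the form: \emph{for every assignment $s\mapsto G_s\in\mathcal U$ on finite sets $s\subseteq\mathbb N$ there is $A\in\mathcal U$ such that $k\in G_s$ whenever $s\in[A]^{<\omega}$, $k\in A$ and $k>\max s$.} Applying this to the cofinite ``good sets'' $G_s=\{k:\Vert\sum_{m\in s}\varepsilon_m x_m\pm x_k\Vert\le\max\{\Vert\sum_{m\in s}\varepsilon_m x_m\Vert,1\}+2^{-|s|}\}$ (all signs), a telescoping along increasing enumerations gives $\Vert\sum_{m\in F}\varepsilon_m x_m\Vert\le 2$ for every finite $F\subseteq A_0$ and all signs, i.e. $(x_n)_{n\in A_0}$ is weakly unconditionally Cauchy. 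Hence for each $A\in\mathcal U$ with $A\subseteq A_0$ the weak$^*$-unconditional sum $z_A:=\sum_{n\in A}x_n\in X^{**}$ is defined and uniformly bounded, and $(z_A)$ is a bounded net over $(\{A\in\mathcal U:A\subseteq A_0\},\supseteq)$. I let $h\in X^{****}$ be any weak$^*$-cluster point of the canonical images $(z_A)$.

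Writing $X^{\perp}\subseteq X^{***}$ for the annihilator of $X$, the first ``soft'' fact is $h|_{X^*}=0$: for $\phi\in X^*$ one has $\langle z_A,\phi\rangle=\sum_{n\in A}\phi(x_n)$, and since this family is absolutely summable on $A_0$ its tails vanish, so $\langle z_A,\phi\rangle\to0$ along the tail $\{A\subseteq A_0\setminus F_0\}$; every cluster value is therefore $0$. The heart of the proof is the upper bound $\Vert x+h\Vert\le\max\{\Vert x\Vert,1\}$ for \emph{every} $x\in X$. Applying weak$^*$ lower semicontinuity of the norm twice (finite sums $\to z_A$, then $z_A\to h$), it suffices to bound $\Vert x+\sum_{m\in F}x_m\Vert$. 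Fixing $x$ and $\varepsilon$, I apply the selective diagonalization to the good sets $G^x_s=\{k:\Vert x+\sum_{m\in s}x_m+x_k\Vert\le\max\{\Vert x+\sum_{m\in s}x_m\Vert,1\}+2^{-|s|}\varepsilon\}$ to obtain $A_x\in\mathcal U$, $A_x\subseteq A_0$, for which telescoping yields $\Vert x+\sum_{m\in F}x_m\Vert\le\max\{\Vert x\Vert,1\}+\varepsilon$ for all finite $F\subseteq A_x$, hence $\Vert x+z_A\Vert\le\max\{\Vert x\Vert,1\}+\varepsilon$ for all $A\in\mathcal U$ with $A\subseteq A_x$. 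As these $A$ form a tail of the net and $x+h$ is a cluster point of $(x+z_A)$, lower semicontinuity gives $\Vert x+h\Vert\le\max\{\Vert x\Vert,1\}+\varepsilon$, and $\varepsilon\to0$ finishes; in particular $\Vert h\Vert\le1$. The decisive point — and the reason a single selective ultrafilter replaces Farmaki's separable diagonalization — is that $h$ is a cluster point of the \emph{whole} net over $\mathcal U$, so each of the uncountably many $x$ is controlled on its own tail $\{A\subseteq A_x\}$ and no single set must serve all $x$ at once.

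For the lower bound $\Vert x+h\Vert\ge\max\{\Vert x\Vert,1\}$ I would argue per $x$. Choose $\phi_n\in S_{X^*}$ norming $x+x_n$, so $\phi_n(x)+\phi_n(x_n)=\Vert x+x_n\Vert\to\max\{\Vert x\Vert,1\}$, and let $G=\mathrm{w^*}\text{-}\lim_{n\to\mathcal U}\phi_n\in B_{X^{***}}$. Then $\langle x+h,G\rangle=G(x)+\langle h,G\rangle$, where $\langle h,G\rangle$ is a cluster value of $G(z_A)=\lim_{n\to\mathcal U}\sum_{m\in A}\phi_n(x_m)$. Using that $(x_m)$ is weakly unconditionally Cauchy, $\sum_{m\in A_0}|\phi_n(x_m)|$ is bounded, and a further application of the Ramsey property thins $A_0$ to a set on which the off-diagonal contributions $\sum_{m\ne n}|\phi_n(x_m)|$ are summably small; evaluating along a subnet of $(z_A)$ converging to $h$ that eventually enters this thinned set, only the diagonal survives and $G(z_A)\to\lim_{n\to\mathcal U}\phi_n(x_n)$. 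Combining, $\langle x+h,G\rangle=\lim_{n\to\mathcal U}\big(\phi_n(x)+\phi_n(x_n)\big)=\max\{\Vert x\Vert,1\}$, so $\Vert x+h\Vert\ge\max\{\Vert x\Vert,1\}$. Taking $x=0$ then forces $\Vert h\Vert=1$, i.e. $h\in S_{X^{****}}$, completing the equality.

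I expect the main obstacle to be exactly the two places where selectivity enters through a weak$^*$ double limit: verifying that the finite-set-indexed diagonalization is genuinely available for a selective $\mathcal U$, and that the off-diagonal interactions $\phi_n(x_m)$ of the norming functionals can be annihilated along $\mathcal U$, so that the iterated limit defining $\langle h,G\rangle$ collapses to its diagonal value. These two technical inputs are what replace the diagonalization that is available for free in the separable case (cf.\ Farmaki and Proposition~\ref{PropEquivalencesSeparable}); everything else is organized so that each fixed $x\in X$ is tested only against its own tail of the net $(z_A)_{A\in\mathcal U}$, which is what makes the nonseparable argument go through with a single $h$.
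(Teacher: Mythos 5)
Your proposal is correct in substance, and it reaches the theorem by a route that genuinely differs from the paper's in both places where the real work happens. The architecture is the same — for each $x$ and $\varepsilon$ find a single $A\in\mathcal U$ all of whose finite blocks of the ASQ sequence are $\varepsilon$-good for $x$, then extract one $h$ by weak$^*$ compactness so that each $x$ is only tested on its own tail — but the paper obtains $A$ from Mathias' characterization (Theorem \ref{Mathiastheorem}): the family $J$ of good sets is closed in $\mathcal P(\mathbb N)$, the ideal it generates is analytic and dense by Lemma \ref{LemmaASQSubsequence}, hence meets $\mathcal U$. You instead invoke the finite-set diagonalization property of selective ultrafilters (every assignment $s\mapsto G_s\in\mathcal U$ admits a diagonal $A\in\mathcal U$), which is indeed a standard, citable equivalent of selectivity (essentially Kunen/Mathias) and should be stated with a reference or proof; applying it directly to the cofinite good sets removes all descriptive-set-theoretic bookkeeping and lets you work with $0/1$ coefficients only, whereas the paper's Lemma \ref{LemmaASQSubsequence} and its set $J$ must handle arbitrary scalars with $\max|\lambda_i|=1$. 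The reason the paper needs those general coefficients is the second, bigger divergence: the lower bound $\|x+h\|\geq\max\{\|x\|,1\}$. The paper builds two-sided estimates into $J$, passes to convex hulls $C_A$ (forcing a common index $m$ with coefficient $1$ in Lemma \ref{LemmaConvexBounds}), and transfers the lower bound through weak$^*$ closures by Hahn--Banach separation — this is precisely why convexity is a hypothesis of Lemma \ref{lem:orthogonalfourthdual}. You avoid convexity altogether and instead test $x+h$ against the $\mathcal U$-limit $G$ of norming functionals $\phi_n$, which obliges you to annihilate the off-diagonal terms $\phi_n(x_m)$. That step does work, but be aware it needs more than pair-Ramsey homogeneity: arranging $|\phi_n(x_m)|<\delta$ for all $m<n$ in $A$ is not enough, since lower-triangle sums could still be of the order of the WUC constant. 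What suffices is (i) for $m>n$, the spacing (P-point) half of your diagonalization combined with absolute convergence of $\sum_m|\phi_n(x_m)|$ for each fixed $n$, and (ii) for $m<n$, the observation that the functional $g:=\lim_{n,\mathcal U}\phi_n$ (weak$^*$ in $X^*$) satisfies $\sum_m|g(x_m)|\leq 2$ by the WUC property, so that after restricting to a far-out tail where this sum is below $\delta$, the finite lower-triangle sums $\sum_{m\in s}|\phi_k(x_m)|$ converge along $k\to\mathcal U$ to a quantity below $2\delta$ and the diagonalization applies to those sets. With (i) and (ii), your identity $\langle h,G\rangle=\lim_{n,\mathcal U}\phi_n(x_n)$ holds up to $O(\delta)$ on each tail, and letting $\delta\to0$ closes the argument; the quantitative slips in your telescoping (the bounds should end in $2\varepsilon$ rather than $\varepsilon$) are harmless. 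In exchange for this harder lower bound, your route dispenses with convex hulls, Hahn--Banach separation, and the analytic-ideal machinery: the two proofs trade the same difficulty between combinatorics and soft functional analysis in opposite directions.
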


Let us recall that the existence of selective ultrafilters over $\mathbb{N}$ is independent of ZFC. In particular, they do exist under CH. Intuitively, the role that selective ultrafilters play in the previous theorem is that they allow us to use Farmaki's ideas avoiding the above mentioned diagonalization argument. A similar use of selective ultrafilters is made in \cite[Section 5]{amcrz} with $\ell_1$-sequences. We do not know if Theorem \ref{theo:selectivo} holds without the use of additional axioms, neither if every Banach space containing $c_0$ can be renormed to be sequentially ASQ.


\textbf{Notation.} Every Banach space in this article is assumed to be a real Banach space. A family of sets is said to have the \textit{finite intersection property} if any intersection of finitely many sets of the family is nonempty.

A nonprincipal ultrafilter $\mathcal{U}$ is \emph{selective} (a.k.a. \emph{Ramsey}) if for every partition $\mathbb{N} = \bigcup_n A_n$ into sets $A_n\not\in\mathcal{U}$ there exists a set in the ultrafilter that meets each set of the partition at exactly one point. Selective ultrafilters exist under CH, even under $\mathfrak{p}=\mathfrak{c}$, and in many other models of set theory, cf. \cite[Proposition 10.9, Chapter 25]{halbeisen} and \cite[Section 4.5]{barto}. We will not work with the previous definition but with the following characterization due to Mathias \cite[Theorem 2.12]{mathias}.

\begin{theorem}\label{Mathiastheorem}
	A nonprincipal ultrafilter $\mathcal U$ is selective if and only if the intersection of $\mathcal{U}$ with any dense analytic ideal is nonempty.
\end{theorem}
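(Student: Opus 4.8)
The plan is to read the statement as a characterisation of selectivity and to prove the two implications separately. Throughout, an \emph{ideal} on $\mathbb N$ is a family $\mathcal I\subseteq\mathcal P(\mathbb N)$ closed under subsets and finite unions, containing the finite sets but not $\mathbb N$; it is \emph{analytic} if it is analytic as a subset of the Cantor space $2^{\mathbb N}$, and \emph{dense} if every infinite subset of $\mathbb N$ has an infinite subset lying in $\mathcal I$. I will use the reformulation of selectivity built into the definition in the excerpt: $\mathcal U$ is selective if and only if for every partition $\mathbb N=\bigcup_n A_n$ with all $A_n\notin\mathcal U$ there is a \emph{selector} $S\in\mathcal U$, meaning $|S\cap A_n|\le 1$ for every $n$. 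I will also use repeatedly that, since $\mathcal U$ is an ultrafilter, a finite union of non-members is a non-member.

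For the implication that selectivity forces $\mathcal U$ to meet every dense analytic ideal, fix a dense analytic ideal $\mathcal I$ and consider the analytic set $\mathcal A=\mathcal I\cap[\mathbb N]^\omega$ of its infinite members. I would invoke the local Ramsey property of selective ultrafilters: analytic sets are completely Ramsey (Silver's theorem, via the Ellentuck topology), and for a selective $\mathcal U$ one can locate the homogeneous set inside $\mathcal U$, so there is $S\in\mathcal U$ with $[S]^\omega\subseteq\mathcal A$ or $[S]^\omega\cap\mathcal A=\emptyset$. Density of $\mathcal I$ excludes the second alternative, since the infinite set $S$ must have an infinite subset in $\mathcal I$; hence $[S]^\omega\subseteq\mathcal I$, and as $S$ is an infinite subset of itself we get $S\in\mathcal I\cap\mathcal U$.

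For the converse I argue by contraposition, producing an \emph{explicit} dense analytic ideal disjoint from a non-selective $\mathcal U$. If $\mathcal U$ is not selective, fix a partition $\{A_n\}$ into non-members admitting no selector in $\mathcal U$, and set
\[
\mathcal I=\bigl\{X\subseteq\mathbb N:\ \exists\,k\in\mathbb N\ \text{such that}\ \{n:|X\cap A_n|>k\}\ \text{is finite}\bigr\}.
\]
One checks directly that $\mathcal I$ is closed under subsets and finite unions, and that it is $F_\sigma$ — for fixed $k$ the condition ``$\{n:|X\cap A_n|>k\}$ is finite'' is a countable union of closed sets — hence analytic. It is dense: given an infinite $X$, either some $X\cap A_n$ is infinite, and this set is an infinite subset of $X$ lying in $\mathcal I$; or every $X\cap A_n$ is finite, whence $X$ meets infinitely many pieces and contains an infinite selector, which lies in $\mathcal I$ with witness $k=1$. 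Finally it is disjoint from $\mathcal U$: if $M\in\mathcal U\cap\mathcal I$ with witness $k$ and finite exceptional set $F=\{n:|M\cap A_n|>k\}$, then $M'=M\setminus\bigcup_{n\in F}A_n\in\mathcal U$ satisfies $|M'\cap A_n|\le k$ for all $n$, so splitting each $M'\cap A_n$ into its first, second, \dots, $k$-th elements writes $M'$ as a union of $k$ selectors, one of which lies in $\mathcal U$ by primeness — contradicting the choice of the partition. Since $\mathbb N\in\mathcal U$, this disjointness also gives $\mathbb N\notin\mathcal I$, so $\mathcal I$ is a proper dense analytic ideal missed by $\mathcal U$, completing the contrapositive.

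The converse direction is thus entirely self-contained and routine; the real weight lies in the first implication, and the hard part is precisely the step that places the homogeneous set \emph{inside} $\mathcal U$. The ambient ZFC fact that analytic sets are completely Ramsey only produces a homogeneous set somewhere in $[\mathbb N]^\omega$, and it is selectivity that is indispensable for pulling it into the ultrafilter. I would supply this either by a fusion argument in which, at each stage, complete Ramsey-ness decides membership on a tail while the diagonal form of selectivity keeps the successively chosen integers confined to a single member of $\mathcal U$, or, more economically, by citing Mathias's local Ramsey theory for selective coideals, from which the homogeneous set in $\mathcal U$ is immediate.
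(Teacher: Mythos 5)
The paper never proves this statement: it is quoted directly from Mathias (\cite[Theorem 2.12]{mathias}), so your attempt is compared against a citation rather than an in-text argument. Your two halves have very different standing. The contrapositive direction is self-contained and correct: the ideal $\mathcal I=\bigl\{X:\exists k\ \{n:|X\cap A_n|>k\}\ \text{finite}\bigr\}$ is indeed $F_\sigma$ (for fixed $k$ and $m$, requiring $|X\cap A_n|\le k$ for all $n\ge m$ is a closed condition), your density argument correctly handles both the case of an infinite trace $X\cap A_n$ and the case where all traces are finite, and the disjointness argument --- prune the finitely many bad pieces (a finite union of non-members of an ultrafilter is a non-member), split $M'$ into $k$ selectors, and invoke primeness --- is complete, including the observation that disjointness from $\mathcal U$ yields properness of $\mathcal I$. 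One cosmetic point: your selectors satisfy $|S\cap A_n|\le 1$ rather than $|S\cap A_n|=1$ as in the paper's definition, but a $\le 1$-selector in $\mathcal U$ extends to an exact selector by adding one point per missed piece, and supersets stay in $\mathcal U$, so the two formulations agree. This explicit construction is a genuine addition relative to the paper, which buries the entire equivalence in the reference.

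The forward direction, however, is reduced rather than proved: the assertion that every analytic $\mathcal A\subseteq[\mathbb N]^\omega$ admits a homogeneous set \emph{inside} a selective $\mathcal U$ is precisely Mathias's local Ramsey theorem for happy families (equivalently, the local Ellentuck theory), i.e., the very result the paper cites. Your derivation from it is fine --- density of $\mathcal I$ rules out the alternative $[S]^\omega\cap\mathcal A=\emptyset$, and $S\in[S]^\omega$ gives $S\in\mathcal I\cap\mathcal U$ --- but your fallback ``fusion argument'' sketch should not be mistaken for a proof. Naive fusion applies the unrestricted Silver/Galvin--Prikry theorem inside members of $\mathcal U$, yet the homogeneous sets it returns need not belong to $\mathcal U$, and arranging that the diagonally constructed set stays in $\mathcal U$ is exactly the nontrivial content of Mathias's argument, not a routine consequence of the partition definition of selectivity. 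So read as a whole: your converse supplies an elementary argument the paper leaves implicit, while your forward direction ultimately coincides with the paper's approach of invoking Mathias, and is only as complete as that citation.
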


Let us explain the terminology used. An ideal $I$ is called \textit{dense} (or \emph{tall}) if for every infinite set $A\subset\mathbb{N}$ there exists a further infinite subset $B\subset A$ with $B\in I$.. A \emph{Polish space} is a topological space that is homeomorphic to a separable complete metric space. We can view $\mathcal{P}(\mathbb{N})$ as a Polish space through the natural bijection with $\{0,1\}^\mathbb{N}$ that identifies each set $A$ with its characteristic function. A subset $Y$ of a Polish space is called \emph{analytic} if there is a continuous surjection from a Polish space $C$ onto $Y$. We refer to \cite{Kechris} for the basics. 

%

\section{Further examples and preliminaries}

As we have pointed out in the introduction, every ASQ Banach space contains $\varepsilon$-copies of $c_0$. 
Infinite-dimensional $C(K)$-spaces are likely the most natural examples of Banach spaces containing isometric copies of $c_0$. Nevertheless, it is easy to check that if $x_1$ is the constant function one in a $C(K)$-space then no element $h$ satisfying the condition of the definition of an ASQ space exists. In order to avoid this problem one may think of $C_0(L)$, i.e. the space of continuous functions defined on a locally compact Hausdorff space $L$ which vanish at infinity endowed with the supremum norm. 

\begin{proposition}\label{prop:caraC_(L)asq}
Let $L$ be a locally compact Hausdorff space. Then $C_0(L)$ is ASQ if and only if $L$ is not compact.
\end{proposition}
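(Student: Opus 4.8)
The plan is to prove both implications of the equivalence, and I expect the forward direction (non-compactness yields the ASQ property) to be the main work.

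First I would dispatch the easy direction: if $L$ is compact, then $C_0(L) = C(L)$ contains the constant function $\mathbbm{1}$ with $\|\mathbbm{1}\| = 1$. For any candidate $h \in S_{C(L)}$ pick a point $t_0 \in L$ where $|h(t_0)|$ is close to $1$; then one of $\|\mathbbm{1} + h\|$ or $\|\mathbbm{1} - h\|$ is close to $2$, because at $t_0$ the values $1 + h(t_0)$ and $1 - h(t_0)$ cannot both be small in modulus. Applying this to the single-element family $\{\mathbbm{1}\} \subset S_X$ with a small $\varepsilon$ shows that no $h$ can satisfy $\|\mathbbm{1} \pm h\| \leq 1 + \varepsilon$, so $C(L)$ is not ASQ. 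This reproves in the ASQ language the observation already made in the text for general $C(K)$.

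For the forward direction, assume $L$ is not compact and fix $\varepsilon > 0$ together with $f_1, \dots, f_n \in S_{C_0(L)}$. Since each $f_i$ vanishes at infinity, the set $K_i = \{t \in L : |f_i(t)| \geq \varepsilon\}$ is compact, and hence so is $K = \bigcup_{i=1}^n K_i$. Because $L$ is locally compact, Hausdorff and non-compact, I can choose a point $p \in L \setminus K$ together with a relatively compact open neighbourhood $V$ of $p$ whose closure is disjoint from $K$; by Urysohn's lemma for locally compact Hausdorff spaces there is $h \in C_0(L)$ with $0 \leq h \leq 1$, $h(p) = 1$ and $\supp(h) \subset V$, so that $\|h\| = 1$. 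The supports of $h$ and of the "large part" of each $f_i$ are then disjoint: at points where $|h|$ is appreciable we have $|f_i| < \varepsilon$, and where $|f_i| \geq \varepsilon$ we have $h = 0$. A pointwise estimate gives $\|f_i \pm h\| \leq \max\{\|f_i\|, \|h\|\} + \varepsilon = 1 + \varepsilon$ for every $i$, which is exactly the ASQ condition.

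The delicate point is the topological separation step: one must genuinely use that $L$ is locally compact Hausdorff and non-compact to extract a point $p$ outside the compact set $K$ that admits a compactly-supported bump function disjoint from $K$. Local compactness and the Hausdorff property guarantee, via the standard Urysohn lemma for such spaces, a function $h \in C_0(L)$ with the required support control, while non-compactness is precisely what ensures $L \setminus K \neq \emptyset$; if $L$ were compact one could be forced to take $K = L$ (as happens for $\mathbbm{1}$), which is the obstruction exploited in the converse. Once the support-disjointness is secured, the norm estimates are routine pointwise computations, so the whole argument hinges on correctly producing the bump function $h$.
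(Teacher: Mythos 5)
Your proof is correct and takes essentially the same route as the paper: both directions hinge on the compactness of the sets $\{t\in L:\vert f_i(t)\vert\geq\varepsilon\}$, the choice of a point outside their union (possible exactly when $L$ is non-compact), and a Urysohn-type bump function supported away from that union; your use of the locally compact Urysohn lemma in place of the paper's passage to the one-point compactification $L\cup\{\infty\}$ is only a cosmetic variation.
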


\begin{proof}
The paragrapgh preceding the statement of the proposition yields that if $C_0(L)$ is ASQ then $L$ is not compact. Conversely, assume that $L$ is not compact and let us prove that $X=C_0(L)$ is ASQ. Remember that $C_0(L)$ can be viewed as the space of continuous functions on the one-point compactification $L\cup\{\infty\}$ that vanish at $\infty$. Pick $f_1,\ldots, f_n\in S_X$ and $\varepsilon>0$, and let us find $g\in S_X$ so that $\Vert f_i+g\Vert\leq 1+\varepsilon$ holds for every $1\leq i\leq n$. Consider, for every $i\in\{1,\ldots, n\}$, the set
$$K_i:=\{t\in L: \vert f_i(t)\vert\geq \varepsilon\},$$
which is compact by the definition of $C_0(L)$. Consequently, $K:=\bigcup\limits_{i=1}^n K_i$ is compact too. Since $L$ is not compact, we can find $t_0\in L\setminus K$ and, by Urysohn's Lemma, a continuous function $g:L\longrightarrow \mathbb [0,1]$ with $g(t_0)=1$ while $g$ vanishes at $K\cup\{\infty\}$. From this we get that  $\Vert f_i+g\Vert\leq 1+\varepsilon$ as desired.
\end{proof}

Characterizing those $C_0(L)$-spaces which are sequentially ASQ requires a little more work. First, let us introduce some notation:
\begin{definition}
Let $L$ be a locally compact Hausdorff space. We say that a sequence of nonempty open sets $(U_n)_n$ in $L$ is \textit{compact escaping} if for every compact set $K\subseteq L$ there exists $m \in \natu$ such that $U_n \cap K = \emptyset $ for every $n \geq m$.
\end{definition}

\begin{proposition}\label{prop:caraC_0(L)seqasq}
Let $L$ be a locally compact Hausdorff space. Then $C_0(L)$ is sequentially ASQ if and only if $L$ has a  compact escaping sequence.
\end{proposition}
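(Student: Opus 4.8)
The plan is to characterise sequential ASQ of $C_0(L)$ by translating it into the pointwise behaviour of a witnessing sequence of functions, proving each implication separately.

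For the sufficiency direction, suppose $(U_n)$ is a compact escaping sequence of nonempty open sets. Using Urysohn's lemma in the locally compact Hausdorff setting, for each $n$ I would choose $t_n\in U_n$ together with a function $g_n\in S_{C_0(L)}$ satisfying $0\leq g_n\leq 1$, $g_n(t_n)=1$ and $\supp(g_n)$ a compact subset of $U_n$. The claim is that $(g_n)$ witnesses sequential ASQ, i.e.\ $\|f+g_n\|\to\max\{\|f\|,1\}$ for every $f\in C_0(L)$. For the upper estimate I fix $\varepsilon>0$; the set $\{t\in L:\abs{f(t)}\geq\varepsilon\}$ is compact, so by the escaping property $\supp(g_n)$ is eventually disjoint from it, and splitting the supremum defining $\|f+g_n\|$ over $\supp(g_n)$ and its complement yields $\|f+g_n\|\leq\max\{\|f\|,1+\varepsilon\}$ for $n$ large. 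For the lower estimate, evaluating at $t_n$ (where $\abs{f(t_n)}$ is eventually small, again by escaping applied to $\{\abs f\geq\varepsilon\}$) gives $\liminf_n\|f+g_n\|\geq 1$, while evaluating at a near-maximiser $s$ of $\abs f$, which lies outside $U_n$ for large $n$ because $\{s\}$ is compact, gives $\liminf_n\|f+g_n\|\geq\|f\|$. Letting $\varepsilon\to0$ closes the argument.

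For the necessity direction, let $(g_n)\subseteq S_{C_0(L)}$ witness sequential ASQ. The crucial step is to show that $\sup_{t\in K}\abs{g_n(t)}\to 0$ for every compact $K\subseteq L$. To this end I take $f\in C_0(L)$ with $0\leq f\leq1$ and $f\equiv 1$ on $K$ (Urysohn again), and test the defining limit against both $x=2f$ and $x=-2f$. Since $\max\{\|2f\|,1\}=2$ and $\abs{g_n}\leq 1<2$ on $K$, evaluating over $K$ opens the absolute values and the two resulting inequalities force $\limsup_n\sup_{t\in K}g_n(t)\leq 0$ and $\liminf_n\inf_{t\in K}g_n(t)\geq 0$, whence $\sup_{t\in K}\abs{g_n(t)}\to 0$. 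Then I set $U_n:=\{t\in L:\abs{g_n(t)}>1/2\}$, which is open and nonempty because $\|g_n\|=1$; the convergence just established shows that for each compact $K$ one has $U_n\cap K=\emptyset$ for all large $n$, so $(U_n)$ is compact escaping.

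I expect the main obstacle to be the necessity direction, specifically the fact that sequential ASQ only controls $\|x+x_n\|$ with a single sign, so both the positive and negative parts of $g_n$ on $K$ must be extracted by testing separately against $2f$ and $-2f$; the scaling by a factor strictly larger than $1$ is chosen precisely so that the maximum on the right collapses to the constant term and the inner absolute values resolve with a definite sign. The remaining work is routine splitting of the supremum norm on $C_0(L)$ together with standard Urysohn-type constructions of bump functions.
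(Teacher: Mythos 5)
Your proof is correct and takes essentially the same approach as the paper: for necessity, the paper also sets $U_n=\{t\in L:|g_n(t)|>\tfrac12\}$ and tests the ASQ limit with both signs against a Urysohn function equal to $1$ on $K$ (your scaling by $2$ is just a cosmetic variant that yields the slightly stronger uniform-convergence-on-compacta statement); for sufficiency, the paper likewise takes norm-one functions supported in the $U_n$, merely leaving implicit the routine upper/lower estimates that you spell out.
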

\begin{proof}
Suppose first that $C_0(L)$ is sequentially ASQ (witnessed by $(f_n)$). Set $U_n=\{t\in L: |f_n(t)|>\frac{1}{2}\}$. We claim that the sequence of open sets $(U_n)_n$ is compact escaping. Let $K$ be any compact set in $L$. Then, we can take a function $f \in S_{C_0(L)}$ taking the value one on $K$. Since 
$$ \| f \pm f_n \| \longrightarrow 1 ,$$
there exists $m\in \natu$ such that $\| f \pm f_n \| <\frac{3}{2}$ for every $n\geq m$. But then $U_n \cap K = \emptyset$ for every $n \geq m $ as desired.

In order to prove the reverse implication, we fix a compact escaping sequence $(U_n)_n$ in $L$. For each $n$, we can find $f_n \in S_{C_0(L)}$ that vanishes out of $U_n$. The fact that $(\|f+f_n\|)_n$ converges to $\max\{\|f\|,1\}$ for every $f\in C_0(L)$ follows from the definition of a compact escaping sequence and the fact that $\{t\in L: |f(t)|\geq \varepsilon \}$ is compact for every $\varepsilon >0$ and every $f \in C_0(L)$.
\end{proof}

If $K$ is a compact space, and $x$ is a non-isolated  point of $K$ that is not the limit of a sequence in $K\setminus\{x\}$, say for instance $\omega_1$ in the ordinal interval $[0,\omega_1]$, then $K\setminus\{x\}$ is a locally compact space without compact scaping sequences. This provides examples of ASQ nonsequentially ASQ spaces. Notice that the existence of a sequence in a locally compact Hausdorff space $L$ converging to $\infty$ does not imply the existence of a compact escaping sequence; for instance, this is the case if we remove $(\omega,\omega_1)$ from $[0,\omega]\times[0,\omega_1]$. More examples of ASQ spaces (respectively sequentially ASQ spaces) can be found in \cite{AHT,all,blr} (\cite{amcrz,gr}).

We finish this section with some auxiliary lemmas. The first one provides sufficient conditions for the existence of an element $h \in S_{X^{****}}$ satisfying condition (3) in Proposition \ref{PropEquivalencesSeparable}.

\begin{lemma}
\label{lem:orthogonalfourthdual}
Let $X$ be a Banach space and $\{C_\alpha: \alpha \in \Gamma\}$ a family of bounded convex sets in $X^{**}$ with the finite intersection property such that for every $\varepsilon>0$ and every $x\in X$ there exists $\alpha \in \Gamma$ satisfying $$(1-\varepsilon)\max\{\|x\|,1\}\leq \|x+x^{**}\| \leq (1+\varepsilon)\max\{\|x\|,1\}$$
for every $x^{**} \in C_\alpha$.
Then there exists $h\in S_{X^{****}}$ such that $\|x+h\|=\max\{\|x\|,1\}$ for every $x\in X$.
\end{lemma}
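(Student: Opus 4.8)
The plan is to realize $h$ as a point common to all the weak$^*$-closures of the sets $C_\alpha$ inside the fourth dual $X^{****}$, and then to extract the two inequalities comprising $\|x+h\|=\max\{\|x\|,1\}$ from the observation that, for a suitable $\alpha$, the translate $x+C_\alpha$ is trapped between a ball and a half-space. At bottom this is a compactness argument combined with Hahn--Banach separation.

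First I would set up the candidate. Viewing each $C_\alpha$ as a bounded subset of $X^{**}\subseteq X^{****}$ through the canonical (isometric) embedding, its weak$^*$-closure $\overline{C_\alpha}$ in $X^{****}=(X^{***})^{*}$ is weak$^*$-compact by Banach--Alaoglu, since it is bounded. Because $\{C_\alpha\}$ has the finite intersection property in $X^{**}$ and $\bigcap_{k=1}^{n}C_{\alpha_k}\subseteq\bigcap_{k=1}^{n}\overline{C_{\alpha_k}}$, the family $\{\overline{C_\alpha}\}$ of weak$^*$-compact sets again has the finite intersection property, so $\bigcap_\alpha\overline{C_\alpha}\neq\emptyset$; I pick any $h$ in this intersection. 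Specializing the identity to $x=0$ below will force $\|h\|=1$, so indeed $h\in S_{X^{****}}$.

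Now fix $x\in X$, write $c=\max\{\|x\|,1\}$, and fix $\varepsilon>0$; let $\alpha$ be an index provided by the hypothesis, so that $(1-\varepsilon)c\le\|x+x^{**}\|\le(1+\varepsilon)c$ for every $x^{**}\in C_\alpha$. The upper bound is the easy half: the translate $x+C_\alpha$ lies in the closed ball of radius $(1+\varepsilon)c$ of $X^{****}$, which is weak$^*$-closed. Since $h\in\overline{C_\alpha}$ and translation by the fixed $x$ is weak$^*$-continuous, the point $x+h$ lies in the weak$^*$-closure of $x+C_\alpha$, whence $\|x+h\|\le(1+\varepsilon)c$.

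The main obstacle is the lower bound, because weak$^*$-closure (equivalently, weak$^*$-lower semicontinuity of the norm) only transports upper bounds: the set $\{z:\|x+z\|\ge(1-\varepsilon)c\}$ need not be weak$^*$-closed. To circumvent this I would use the \emph{convexity} of $C_\alpha$. Since $x+C_\alpha$ is convex and disjoint from the open ball of radius $(1-\varepsilon)c$, the Hahn--Banach separation theorem produces a \emph{single} functional $\varphi\in S_{X^{***}}$ with $\langle x+x^{**},\varphi\rangle\ge(1-\varepsilon)c$ for all $x^{**}\in C_\alpha$. The crucial point is that the half-space $\{z:\langle z,\varphi\rangle\ge(1-\varepsilon)c\}$ \emph{is} weak$^*$-closed; as $x+h$ belongs to the weak$^*$-closure of $x+C_\alpha$, we get $\langle x+h,\varphi\rangle\ge(1-\varepsilon)c$, and therefore $\|x+h\|\ge(1-\varepsilon)c$. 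Letting $\varepsilon\to0$ in the two estimates yields $\|x+h\|=c=\max\{\|x\|,1\}$, which completes the argument.
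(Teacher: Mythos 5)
Your proof is correct and follows essentially the same route as the paper: take $h$ in the (nonempty, by Banach--Alaoglu and the finite intersection property) intersection $\bigcap_\alpha \overline{C_\alpha}^{w^*}$ in $X^{****}$, get the upper bound from weak$^*$-lower semicontinuity of the norm, and get the lower bound by Hahn--Banach separation of the convex set $x+C_\alpha$ from the open ball, using that the resulting half-space determined by $\varphi\in X^{***}$ is weak$^*$-closed. The only difference is that you spell out explicitly the separation argument that the paper compresses into ``a standard application of Hahn-Banach Theorem.''
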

\begin{proof}
Since the family $\{C_\alpha: \alpha \in \Gamma\}$ has the finite intersection property, we have that
$ \bigcap_{\alpha\in \Gamma} \overline{C_\alpha}^{w^*} \neq \emptyset $, where the previous weak*-closures are taken in $X^{****}$. We claim that any $h \in \bigcap_{\alpha\in \Gamma} \overline{C_\alpha}^{w^*}$ satisfies the desired property.
Fix $x \in X$ and an arbitray $\varepsilon >0$. Then there exists $\alpha \in \Gamma$ such that $$(1-\varepsilon)\max\{\|x\|,1\}\leq \|x+x^{**}\| \leq (1+\varepsilon)\max\{\|x\|,1\|\}$$
for every $x^{**} \in C_\alpha$.
Since $h \in \overline{C_\alpha}^{w^*}$, a standard application of Hahn-Banach Theorem and the $w^*$-lower semicontinuity of the norm of $X^{****}$ ensure that 
$$(1-\varepsilon)\max\{\|x\|,1\}\leq \|x+h\| \leq (1+\varepsilon)\max\{\|x\|,1\|\}.$$
The conclusion follows from the arbitrariness of $\varepsilon>0$.
\end{proof}


\begin{lemma}\label{lemma:auxiliar1}
	Let $X$ be a sequentially ASQ  Banach space, witnessed by $(x_n)$. For every finite-dimensional subspace $F$ of $X$ and every $\varepsilon>0$ there exists $m\in \natu$ such that
	$$\left\vert \Vert y+\lambda x_n\Vert-\max\{\Vert y\Vert, |\lambda| \}\right\vert<\varepsilon \max\{\Vert y\Vert, |\lambda| \}$$
	for every $y\in F$, every $\lambda \in \R$ and every $n \geq m$.
\end{lemma}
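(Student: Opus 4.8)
The plan is to exploit the positive homogeneity of the desired inequality in order to reduce everything to a uniform convergence statement over a compact set. Writing $M(y,\lambda):=\max\{\Vert y\Vert,|\lambda|\}$, observe that both sides scale identically: replacing $(y,\lambda)$ by $(ty,t\lambda)$ with $t>0$ multiplies $\big|\Vert y+\lambda x_n\Vert-M(y,\lambda)\big|$ and $\varepsilon M(y,\lambda)$ by the same factor $t$, and the whole inequality is invariant under $(y,\lambda)\mapsto(-y,-\lambda)$. Hence it suffices to establish the estimate on the compact set
$$K:=\{(y,\lambda)\in F\times\R : M(y,\lambda)=1\},$$
where it reads simply $\big|\Vert y+\lambda x_n\Vert-1\big|<\varepsilon$; the only point one must set aside is the degenerate $(y,\lambda)=(0,0)$, where both sides vanish, so the statement is to be understood for $(y,\lambda)\neq(0,0)$ (equivalently, whenever $M(y,\lambda)>0$).

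First I would record the pointwise convergence $\Vert y+\lambda x_n\Vert\to M(y,\lambda)$ for each fixed $(y,\lambda)$. For $\lambda=0$ this is trivial, as the left-hand side equals $\Vert y\Vert=M(y,0)$ for every $n$. For $\lambda\neq0$, factor $y+\lambda x_n=\lambda\big(\tfrac{y}{\lambda}+x_n\big)$ and apply the sequential ASQ hypothesis to the vector $x=y/\lambda\in X$, obtaining $\Vert \tfrac{y}{\lambda}+x_n\Vert\to\max\{\Vert y/\lambda\Vert,1\}$; multiplying by $|\lambda|$ gives $\Vert y+\lambda x_n\Vert\to|\lambda|\max\{\Vert y/\lambda\Vert,1\}=M(y,\lambda)$.

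The crux is then to upgrade this to convergence that is uniform in the direction $(y,\lambda)$, and the key observation is that the family $g_n(y,\lambda):=\Vert y+\lambda x_n\Vert$ is equicontinuous. Indeed, since $\Vert x_n\Vert=1$, the triangle inequality yields $|g_n(y,\lambda)-g_n(y',\lambda')|\le\Vert y-y'\Vert+|\lambda-\lambda'|$ for all $n$, so the $g_n$ are uniformly Lipschitz. On the compact metric space $K$, an equicontinuous sequence converging pointwise to the continuous function $M$ must converge uniformly, by the standard covering argument (cover $K$ by finitely many small balls, control $g_n$ at their centers using pointwise convergence, and propagate the estimate to the whole ball via the uniform Lipschitz bound). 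This produces $m\in\natu$ with $|g_n(y,\lambda)-1|<\varepsilon$ for all $n\ge m$ and all $(y,\lambda)\in K$, and rescaling an arbitrary nonzero $(y,\lambda)$ to $M(y,\lambda)=1$ recovers the claimed inequality.

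The main obstacle is precisely this passage from the merely pointwise convergence supplied by the definition of sequentially ASQ to an estimate uniform over all directions in $F$. It is dissolved by the homogeneity reduction, which compactifies the relevant set of directions, together with the equicontinuity coming for free from $\Vert x_n\Vert=1$; notably, the argument is entirely finite-dimensional on $F$ and requires neither separability nor any diagonalization.
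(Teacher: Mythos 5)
Your proof is correct. The pointwise convergence step is sound (applying the sequentially ASQ hypothesis to $y/\lambda$ handles negative $\lambda$ automatically), the maps $g_n$ are indeed uniformly $1$-Lipschitz because $\|x_n\|=1$, and pointwise convergence plus equicontinuity on the compact set $K$ does yield uniform convergence by the covering argument you describe; the final rescaling is valid. The paper's proof shares the same engine -- compactness of a finite-dimensional sphere plus the triangle inequality -- but organizes it differently: it takes a finite $\varepsilon/2$-net in $S_F$, uses the hypothesis only at the net points to obtain $\left|\,\|y\pm x_n\|-1\,\right|<\varepsilon$ uniformly for $y\in S_F$, and then invokes an external algebraic lemma (Lemma 2.2 of \cite{all}) to pass from the estimates at $(y,\pm1)$, $y\in S_F$, to arbitrary $(y,\lambda)\in F\times\R$. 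Your homogeneity reduction absorbs that last step into the compactness argument itself: by working on the unit sphere of $F\oplus\R$ under the max norm you never need the cited lemma, at the price of invoking the ASQ hypothesis at the vectors $y/\lambda$, which may lie far from $S_F$ (harmless, since the hypothesis holds for every $x\in X$). So your argument is self-contained where the paper's is shorter by outsourcing; both are entirely finite-dimensional and avoid separability. One small point in your favor: you explicitly flag that the strict inequality forces the exclusion of $(y,\lambda)=(0,0)$, where the claimed inequality would read $0<0$; the paper's statement and proof pass over this degenerate case in silence.
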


\begin{proof} 
	Fix $F$ and $\varepsilon>0$. Since $S_F$ is compact, we can find $\{y_1,\ldots,y_k\}\subseteq S_F$ such that $S_F\subseteq \bigcup\limits_{i=1}^k B\left( y_i,\frac{\varepsilon}{2}\right)$. By definition, we can find $m\in\mathbb N$ such that $n\geq m$ implies
	$$|\Vert y_i \pm x_n\Vert - 1|< \frac{\varepsilon}{2}\ \mbox{ for every } i\in\{1,\ldots, k\}.$$
	Fix $n\geq m$. Since $S_F\subseteq \bigcup\limits_{i=1}^n B\left( y_i,\frac{\varepsilon}{2}\right)$, the triangle inequality implies that
	$$|\Vert y \pm x_n\Vert -1|< \varepsilon$$
	holds for all $y\in S_F$.
	Now the conclusion follows from \cite[Lemma 2.2]{all}.
\end{proof}

%

\begin{lemma}
	\label{LemmaASQSubsequence}
	Let $X$ be a sequentially ASQ Banach space, witnessed by $(x_n)$, and let $\varepsilon_n$ be a sequence of positive numbers such that $\varepsilon_n\rightarrow 0$. Let $F_n$ be an incresing sequence of finite-dimensional Banach spaces. Then, there exists a subsequence $(y_n)$ of  $(x_n)$ such that
	$$\left\vert \Vert x+y\Vert-\max\{\Vert x\Vert, 1\}\right\vert<\varepsilon_n \max\{\Vert x\Vert, 1\}$$
	holds for every $n\in\mathbb N, x\in span(F_n\cup\{y_1,\ldots, y_{n-1}\})$ and  every 
	$y\in\{\sum_{i=n+1}^k\lambda_i y_i: k>n,~ \max\{\vert \lambda_i\vert: n+1\leq i \leq k\}= 1\}$.
\end{lemma}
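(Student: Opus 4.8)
The plan is to build the subsequence $(y_n)$ by induction, arranging that the tail terms $y_{n+1},y_{n+2},\dots$ behave like an almost isometric $c_0$-basis over every finite-dimensional space built from $F_n$ and the earlier terms. The only tool is Lemma \ref{lemma:auxiliar1}, whose \emph{homogeneous} (relative-error) formulation is precisely what lets the individual estimates be multiplied together. Before starting I would fix positive tolerances $(\delta_j)$ whose tail products are controlled by $(\varepsilon_n)$, namely $\prod_{i>n}(1+\delta_i)<1+\varepsilon_n$ and $\prod_{i>n}(1-\delta_i)>1-\varepsilon_n$ for every $n$. Such $\delta_j$ exist because $\varepsilon_n\to 0$: it suffices to prove the estimate with $\varepsilon_n$ replaced by $q_n=\tfrac12\min(\varepsilon_1,\dots,\varepsilon_n,1/n)$, which is positive, nonincreasing, tends to $0$ and satisfies $q_n<\varepsilon_n$; then one prescribes a strictly decreasing tail-sum sequence $T_n=\min(\log(1+q_n),T_{n-1}/2)$ and sets $\delta_j=T_{j-1}-T_j>0$, so that $\sum_{i>n}\delta_i=T_n\le\log(1+q_n)$ forces the required product bounds.

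For the induction, suppose $y_1,\dots,y_{j-1}$ have been chosen as terms of $(x_n)$ with strictly increasing indices. I would put $W_{j-1}:=\spann(F_{j-1}\cup\{y_1,\dots,y_{j-1}\})$, a finite-dimensional subspace containing every space $\spann(F_n\cup\{y_1,\dots,y_{n-1},y_{n+1},\dots,y_{j-1}\})$ with $n<j$, apply Lemma \ref{lemma:auxiliar1} to $W_{j-1}$ and $\delta_j$ to obtain a threshold $m_j$, and choose $y_j=x_m$ with $m\ge m_j$ and $m$ larger than the index of $y_{j-1}$. This keeps $(y_n)$ a genuine subsequence and guarantees
\[
\bigl|\,\Vert w+\lambda y_j\Vert-\max\{\Vert w\Vert,|\lambda|\}\,\bigr|<\delta_j\max\{\Vert w\Vert,|\lambda|\}
\]
for all $w\in W_{j-1}$ and all $\lambda\in\R$.

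To verify the conclusion I would fix $n$, $x\in\spann(F_n\cup\{y_1,\dots,y_{n-1}\})$ and $y=\sum_{i=n+1}^k\lambda_i y_i$ with $\max_i|\lambda_i|=1$, and set $S_n=x$, $S_j=S_{j-1}+\lambda_j y_j$ for $n<j\le k$, so $x+y=S_k$. Since each $S_{j-1}$ lies in $W_{j-1}$, the displayed estimate gives at every step $(1-\delta_j)\max\{\Vert S_{j-1}\Vert,|\lambda_j|\}<\Vert S_j\Vert<(1+\delta_j)\max\{\Vert S_{j-1}\Vert,|\lambda_j|\}$. Writing $M=\max\{\Vert x\Vert,1\}$ and using $|\lambda_j|\le 1\le M$, the upper estimates telescope to $\Vert S_k\Vert<\prod_{i=n+1}^k(1+\delta_i)\,M\le(1+\varepsilon_n)M$. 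For the lower bound I would argue twice: telescoping $\Vert S_j\Vert\ge(1-\delta_j)\Vert S_{j-1}\Vert$ down to $S_n=x$ gives $\Vert S_k\Vert\ge\prod_{i=n+1}^k(1-\delta_i)\Vert x\Vert\ge(1-\varepsilon_n)\Vert x\Vert$; and choosing $j_0$ with $|\lambda_{j_0}|=1$, the bound $\Vert S_{j_0}\Vert\ge(1-\delta_{j_0})|\lambda_{j_0}|=(1-\delta_{j_0})$ followed by telescoping from $j_0$ to $k$ gives $\Vert S_k\Vert\ge\prod_{i=j_0}^k(1-\delta_i)\ge(1-\varepsilon_n)$. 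Together these yield $\Vert x+y\Vert\ge(1-\varepsilon_n)M$, finishing the estimate.

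The main obstacle is the bookkeeping of errors: the number $k-n$ of terms added is unbounded, so the per-step relative errors must be multiplied over arbitrarily long blocks while the total stays within $1\pm\varepsilon_n$ simultaneously for every base index $n$. This is exactly what forces the tail-product choice of $(\delta_j)$ and the homogeneous form of Lemma \ref{lemma:auxiliar1}, and it is the reason a naive finite-partition of a single $\varepsilon$ does not suffice. A secondary point to handle carefully is the indexing: since $x$ uses $y_1,\dots,y_{n-1}$ and $y$ uses $y_{n+1},\dots,y_k$ (the index $n$ being skipped), one must select at stage $j$ a single finite-dimensional space $W_{j-1}$ large enough that the estimate for $y_j$ applies to the partial sums $S_{j-1}$ coming from \emph{all} admissible base indices $n<j$ at once.
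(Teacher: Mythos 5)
Your proposal is correct and follows essentially the same route as the paper: fix tolerances $(\delta_j)$ whose tail products are pinched between $1-\varepsilon_n$ and $1+\varepsilon_n$, build the subsequence by inductively applying Lemma \ref{lemma:auxiliar1} to the growing finite-dimensional spaces, and telescope the homogeneous per-step estimates. The only differences are cosmetic: you give an explicit construction of the $(\delta_j)$ (the paper merely asserts their existence) and you write out the lower-bound argument, including the step that uses an index $j_0$ with $\vert\lambda_{j_0}\vert=1$, which the paper dismisses as ``analogous.''
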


\begin{proof}
	Take a sequence of positive numbers $\delta_n>0$ such that $$1-\varepsilon_n<\prod_{i=n+1}^\infty (1-\delta_i)<\prod_{i=n+1}^\infty (1+\delta_i) <1+\varepsilon_n$$ for every $n\in \natu$.	
	
	An inductive application of Lemma \ref{lemma:auxiliar1} yields a subsequence $(y_n)$ of $(x_n)$ such that
	$$\left\vert \Vert x+\lambda y_n\Vert-\max\{\Vert x\Vert, |\lambda| \}\right\vert<\delta_n \max\{\Vert x\Vert, |\lambda| \}$$
	for every $x\in span(F_n\cup\{y_1,\ldots, y_{n-1}\})$, every $\lambda \in \R$ and every $n \in \natu$.

	We prove now that the subsequence $(y_n)$ satisfies the thesis of the lemma.
	Set $n\in\mathbb N,~x\in span(F_n\cup\{y_1,\ldots, y_{n-1}\})$ and $y=\sum_{i=n+1}^k\lambda_i y_i$ with $ \max\{\vert \lambda_i\vert: n+1\leq i\leq k\}= 1\}$.
	
	Then,
	\[\begin{split} 	\Vert x+y\Vert&  = \left\Vert x+\sum_{i=n+1}^k\lambda_i y_i\right\Vert\\
	&  =  \left\Vert \left(x+\sum_{i=n+1}^{k-1}\lambda_i y_i \right) +\lambda_ky_k \right\Vert \\
	& < (1+\delta_k)\left(\max\left\{\left\|x+\sum_{i=n+1}^{k-1}\lambda_i y_i\right\|, |\lambda_k|\right\}\right) \\
	& < (1+\delta_k)(1+\delta_{k-1})\left(\max\left\{\left\|x+\sum_{i=n+1}^{k-2}\lambda_i y_i\right\|, |\lambda_{k-1}|,|\lambda_k|\right\}\right)\\
	& <\ldots < \left(\prod_{i=n+1}^k(1+\delta_i) \right)\max\{\|x\|,|\lambda_{n+1}|,\lambda_{n+2}\|,\ldots,|\lambda_k|\}\\
	& <(1+\varepsilon_n)\max\{\|x\|,1\}.\end{split}\]

	The proof of the inequality $$\|x+y\| > (1-\varepsilon) \max\{\|x\|,1\}$$
	is analogous.
\end{proof}

\section{Sequentially ASQ spaces and selective ultrafilters}

The goal of this section is to prove Theorem \ref{theo:selectivo}. Throughout this section we fix a sequentially ASQ Banach space $X$, witnessed by a sequence of vectors $(x_n)$, and a selective ultrafilter $\mathcal{U}$.

%
%

If we fix a sequence $(\varepsilon_n)$ converging to 0, we can assume, up to an application of Lemma \ref{LemmaASQSubsequence}, that $(x_n)$ satisfies that the inequalities
\begin{equation}
\label{eqbounds}
1-\varepsilon_n\leq \Vert x_i\pm y\Vert\leq 1+\varepsilon_n
\end{equation}
hold for every $1\leq i\leq n$ and every $y\in\{\sum_{j=n+1}^k\lambda_j x_j: k\in\mathbb N, \max\{\vert \lambda_j\vert, n+1\leq j\leq k\}= 1\}$.

%
%

Given $A\in\mathcal U$ and $n\in \natu$ we define
$$x_{n}^A:=\sum_{i\in A,~i\leq n}x_i.$$
It follows from (\ref{eqbounds}) that the set $\{x_{n}^A: n \in \natu, A\in\mathcal U\}$ is bounded. For each $A \in \mathcal{U}$ we fix $y_A^{**}$ any $w^*$-cluster point of the sequence $(x_{n}^A)_{n}$ in $B_{X^{**}}$ and consider $C_A:=\co \{y_B^{**}: B\subseteq A,~B\in \mathcal U\} \subseteq B_{X^{**}}$.
In order to prove Theorem \ref{theo:selectivo}, it is enough to show that the family $\{C_A: A \in \mathcal{U}\}$ satisfies the hypotheses of Lemma \ref{lem:orthogonalfourthdual}. The finite intersection property is a consequence of the finite intersection property of the ultrafilter $\mathcal{U}$, so what remains to be proved is the following lemma:

\begin{lemma}
\label{LemmaConvexBounds} Let $x\in X$ and $\varepsilon>0$. Then there exists $A \in \mathcal{U}$ such that 
$$(1-\varepsilon)\max\{\|x\|,1\} \leq \|x+x^{**}\| \leq (1+\varepsilon)\max\{\|x\|,1\}$$
for every $x^{**}\in C_A$.
\end{lemma}
\begin{proof}
	Define $J$ as the family of those $A\subseteq \mathbb N$ such that
	$$(1-\varepsilon)\max\{\Vert x\Vert,1\}\leq \Vert x+y\Vert\leq (1+\varepsilon)\max\{\Vert x\Vert, 1\}$$
	holds for every $y\in\lbrace\sum\limits_{\substack{i\in B}}\lambda_i x_{i}: B\subseteq A,~B \mbox{ finite and } \max\{\vert \lambda_i\vert: i \in B\}= 1\rbrace$.

	We claim that the family $J$ is a closed subset of $\mathcal{P} (\natu)$. Suppose that $A\notin J$. Then there exist $B\subseteq A$ with $B$ being finite and a family of real numbers $\{\lambda_i: i \in B\}$ with $\max\{\vert \lambda_i\vert: i \in B\}= 1$ such that 
	$$ |\Vert x+y\Vert - \max\{\Vert x\Vert, 1\} | >\varepsilon,$$
	where $y=\sum\limits_{i\in B}\lambda_i x_{i}$. It is immediate then that the set $U$ of all subsets of $\mathcal{P}(\natu)$ containing $B$ is an open subset in $\mathcal{P} (\natu)$ with $A\in U$ and $U\cap J = \emptyset$, so we can conclude that $J$ is closed.

	Let $I$ be the ideal generated by $J$. On one hand, since $J$ is closed, we have that $I$ is an analytic set (cf. \cite[Fact 5.2]{amcrz}). On the other hand, Lemma \ref{LemmaASQSubsequence} guarantees that $I$ is a dense ideal in $\mathbb N$. By Theorem \ref{Mathiastheorem} there exists a set $A\in J\cap \mathcal U$. We claim that $C_A$ satisfies the desired property. Let $x^{**}\in C_A$. Then $x^{**}$ is of the form $\sum_{j=1}^n \lambda_j y_{B_j}^{**}$, where $\lambda_j \geq 0$, $\sum_{j=1}^n \lambda_j=1$,  $B_j \subseteq A$ and $B_j \in \mathcal{U}$ for every $j$.
	Set $m \in B_1 \cap B_2 \cap \ldots \cap B_n$. Notice that the previous intersection is nonempty since it is a set of $\mathcal{U}$.
	Consider $C_{A,m}':=\co\{x_n^B: n\in \natu,~m\in B\subseteq A,~B\in \mathcal{U},~n\geq m\}\subseteq X$. It is clear that $x^{**}\in \overline{C_{A,m}'}^{w^*}$ in $X^{**}$.
	Furthermore, any element of $C_{A,m}'$ is of the form $\sum\limits_{\substack{i\in B}}\lambda_i x_{i}$ with $m\in B\subseteq A,~B \mbox{ finite and } \max\{\vert \lambda_i\vert: i \in B\}=\lambda_m= 1$.
	Thus, since $A \in J$, we conclude from the definition of $J$ that 
	$$(1-\varepsilon)\max\{\Vert x\Vert,1\}\leq \Vert x+y\Vert\leq (1+\varepsilon)\max\{\Vert x\Vert, 1\}$$
	for every $y\in C_{A,m}'$.
	It follows from a straightforward application of Hahn-Banach Theorem and the $w^*$-lower semicontinuity of the norm of $X^{**}$ that 
	$$(1-\varepsilon)\max\{\Vert x\Vert,1\}\leq \Vert x+y^{**}\Vert\leq (1+\varepsilon)\max\{\Vert x\Vert, 1\}$$
	for every $y^{**}\in \overline{C_{A,m}'}^{w^*}$ and, in particular, $$(1-\varepsilon)\max\{\Vert x\Vert,1\}\leq \Vert x+x^{**}\Vert\leq (1+\varepsilon)\max\{\Vert x\Vert, 1\}$$
	as desired.
	\end{proof}


\section{Renorming spaces containing $c_0$}

%

The aim of this section is to prove Theorem \ref{theo:renorming}.
Set $X$ any Banach space containing an isomorphic copy of $c_0$. Throughout this section $\mathcal{U}$ denotes a fixed free ultrafilter.
The desired equivalent norm will be the one defined in \cite[Theorem 2.3]{blr}. Let us recall the definition of this norm.

We can assume, up to an equivalent renorming, that we have a sequence $(e_n)$ in $X$ isometric to the $c_0$-basis. The weak$^\ast$-closure of $c_0$ will produce a copy of $\ell_\infty$ inside $X^{**}$ and we will have $X^{**}=\ell_\infty \oplus Z$ (c.f. e.g. \cite[Proposition 5.13]{fab}).  Notice that if we write $e_n=y+z$ with $y\in \ell_\infty$ and $z\in Z$ then $y=e_n$ and $z=0$. We consider on $X$ an equivalent norm $\vert\cdot \vert$ as follows. If we see $X\subseteq X^{**}=\ell_\infty\oplus Z$, given any $x\in X$ there are unique $y\in \ell_\infty$ and $z\in Z$ so that $x=(y,z)$.
We declare the following norm:
	$$\vert x\vert:=\max\{\vert \lim_\mathcal U y\vert, \sup_{n\in\mathbb N}\vert y(n)-\lim_\mathcal{U}y\vert,\Vert z\Vert\}.$$
	This defines an equivalent norm on $X$; see \cite[Theorem 2.3]{blr} for a proof. 
	Furthermore, it is immediate that $(e_n)$ is isometrically equivalent to the $c_0$-basis under this norm.

	For every $n\in\mathbb N$ and every $A\in \mathcal U$ we define
	$$x_n^A:=\sum_{i\in A,~i\leq n} e_i.$$
	Define, given any $A\in\mathcal U$, 
	$$y_A^{**}:=\lim_{n,\mathcal U} x_n^A\in X^{**},$$
	where the previous limit is taken in the weak* topology of $X^{**}$, and 
	set $C_A:=\co \{y_B^{**}: B\subseteq A,~B\in \mathcal U\}$.
	The proof now follows the same ideas of the previous section. It is enough to show that the family $\{C_A: A \in \mathcal{U}\}$ satisfies the hypotheses of Lemma \ref{lem:orthogonalfourthdual}. The finite intersection property is again a direct consequence of the finite intersection property of $\mathcal{U}$ and the rest follows from the following lemma:

%
%
%
%


\begin{lemma}
	\label{LemmaConvexBoundsRenorming} Let $x\in X$ and $\varepsilon>0$. Then there exists $A \in \mathcal{U}$ such that 
	$$(1-\varepsilon)\max\{|x|,1\} \leq |x+x^{**}| \leq (1+\varepsilon)\max\{|x|,1\}$$
	for every $x^{**}\in C_A$.
\end{lemma}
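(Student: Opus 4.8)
The plan is to check, for the family $\{C_A:A\in\mathcal{U}\}$, the hypotheses of Lemma \ref{lem:orthogonalfourthdual}; the finite intersection property being clear, only the two-sided estimate must be produced. I write $x=(y,z)$ according to $X^{**}=\ell_\infty\oplus Z$ and set $\ell:=\lim_\mathcal{U}y$ and $M:=\max\{|x|,1\}$. The key observation, and the reason no selective ultrafilter is needed here in contrast with Section 3, is that an adequate $A$ can be written down explicitly: for a small $\delta>0$ to be calibrated against $\varepsilon$ at the end, I would take
\[
A:=\{n\in\mathbb{N}:|y(n)-\ell|<\delta\},
\]
which lies in $\mathcal{U}$ simply because $\ell=\lim_\mathcal{U}y$. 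I claim this $A$ works.

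First I would reduce the estimate over $C_A$ to an estimate inside $X$. Fixing $x^{**}\in C_A$, I write $x^{**}=\sum_{j=1}^p\mu_jy_{B_j}^{**}$ with $\mu_j\ge0$, $\sum_j\mu_j=1$ and $B_j\subseteq A$ in $\mathcal{U}$, and note that $x^{**}=\lim_{k,\mathcal{U}}w_k$, where $w_k:=\sum_j\mu_jx_k^{B_j}=\sum_{i\le k}c_i^{(k)}e_i\in X$ and $c_i^{(k)}=\sum_{\{j:\,i\in B_j\}}\mu_j\in[0,1]$. Each $w_k$ is thus a finite combination of the $e_i$ with support inside $\bigcup_jB_j\subseteq A$; choosing $m\in\bigcap_jB_j$ (a nonempty finite intersection of members of $\mathcal{U}$) gives $c_m^{(k)}=1$ for all $k\ge m$, so the largest coefficient of $w_k$ equals $1$ eventually.

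Second, I would estimate $|x+w_k|$ directly from the definition of $|\cdot|$. As $w_k$ is finitely supported, the $\ell_\infty$-part of $x+w_k$ still has $\lim_\mathcal{U}$ equal to $\ell$; and as the support sits inside $A$, where $|y-\ell|<\delta$, the oscillation term $\sup_i|(y+c^{(k)})(i)-\ell|$ differs from that of $x$ by at most $\delta$ on the support and is unchanged off it, while being at least $1-\delta$ because of the coefficient $1$ at the index $m\in A$. A short computation then yields $(1-\delta)M\le|x+w_k|\le(1+\delta)M$ for all large $k$, the $1$ in $M=\max\{|x|,1\}$ absorbing the at most $\delta$ contribution of the indices inside $A$. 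The upper bound now transfers to $x^{**}=\lim_{k,\mathcal{U}}w_k$ by the weak$^*$-lower semicontinuity of $|\cdot|$ on $X^{**}$.

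The lower bound is where the real care is required, and I expect it to be the main obstacle: weak$^*$ limits may strictly decrease the norm, and, as one checks on examples, the defining expression for $|\cdot|$ does \emph{not} extend to $X^{**}$ by the same formula, precisely because $u\mapsto\lim_\mathcal{U}(\ell_\infty\text{-part of }u)$ fails to be weak$^*$ continuous. I would therefore bound $|x+x^{**}|$ from below by testing $x+x^{**}$ against a single functional of $|\cdot|^*$-norm at most $1$ chosen \emph{in} $X^*$, hence weak$^*$ continuous and evaluable as the limit of its values on the $w_k$; which functional to use is dictated by the term realizing $M$. Concretely: $u\mapsto\lim_\mathcal{U}(\ell_\infty\text{-part of }u)$ when $M=|\ell|$; an oscillation functional $u\mapsto u(k)-\lim_\mathcal{U}(\cdots)$ taken at $k=m$ when $M=1$ and at an index $k\notin A$ with $|y(k)-\ell|$ close to $M$ when $M=\sup_k|y(k)-\ell|$; and a Hahn--Banach extension of a norming functional of $z$ pulled back from $Z^*$ when $M=\|z\|$. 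In each case the finitely supported coefficients $c^{(k)}$ equal $1$ at $m$ or vanish at the chosen index outside $A$, so the finite-support tails do not shift the baseline and the limit of the test values has modulus at least $(1-\delta)M$. Taking $\delta<\varepsilon$ then yields both inequalities for every $x^{**}\in C_A$, as required.
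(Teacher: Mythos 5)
Your proposal is correct, and most of it coincides with the paper's own proof: the paper uses exactly the same explicit set $A=\{m\in\mathbb{N}:\vert y(m)-\lim_{\mathcal U}y\vert<\varepsilon\}$, the same reduction of $x^{**}\in C_A$ to a convex combination $\sum_j\lambda_j y_{B_j}^{**}$ with a common index $p\in\bigcap_j B_j$, the same coordinatewise computation for the upper bound (indices outside $A$ are untouched because the supports of the $x_p^{B_j}$ lie inside $A$, while indices in $A$ contribute at most $1+\varepsilon$), and the same transfer to $x^{**}$ via weak$^*$-lower semicontinuity. The genuine difference is in the lower bound. The paper first proves the upper bound for \emph{both} signs, $\vert x\pm x^{**}\vert\leq(1+\varepsilon)\max\{\vert x\vert,1\}$, and then needs only two functionals: when $\max\{\vert x\vert,1\}=1$ it takes $f_p\in S_{X^*}$ norming $e_p$ (which satisfies $f_p(x^{**})=1$ by biorthogonality), and when $\max\{\vert x\vert,1\}=\vert x\vert$ it takes an arbitrary norming functional $g$ of $x$; in either case the two-sided upper bound forces the functional to nearly annihilate the complementary vector ($\vert f_p(x)\vert\leq\varepsilon$, respectively $\vert g(x^{**})\vert\leq\varepsilon\vert x\vert$), and evaluating at $x+x^{**}$ finishes the proof with no case analysis on which entry of the maximum is active and no explicit description of $g$. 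Your route replaces this trick by a four-case construction of explicit weak$^*$-continuous functionals ($\pm\lim_{\mathcal U}$, the oscillation functional at $m$, an oscillation functional at some $q_0\notin A$ nearly attaining $\sup_q\vert y(q)-\ell\vert$, and $\psi$ composed with the projection onto $Z$ for $\psi$ norming $z$), each evaluated as a limit along the $w_k$. This does work: each of these functionals is dominated by one entry of the maximum defining $\vert\cdot\vert$ and hence has $\vert\cdot\vert^*$-norm at most one; the coefficient $1$ at $m$ and the vanishing of the $c^{(k)}$ off $A$ do exactly what you claim; and in the oscillation case the indices nearly attaining the supremum must indeed lie outside $A$, since that supremum is at least $1>\delta$. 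Your version is longer and more tightly tied to the concrete formula for the norm, but it has one structural advantage: it only ever uses estimates on $x+x^{**}$, whereas the paper's argument genuinely requires the upper bound for both $x+x^{**}$ and $x-x^{**}$.
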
	
\begin{proof}
		Given $x\in X$ and $\varepsilon>0$, define
		$$A:=\left\{m\in\mathbb N: \left\vert y(m)-\lim_\mathcal U y\right\vert<\varepsilon\right\},$$
		where $y$ is the unique $y\in\ell_\infty$ so that $x=(y,z)\in \ell_\infty\oplus Z$.
		Notice that $A\in \mathcal{U}$.
		We claim that $C_A$ has the desired property. Let $x^{**}=\sum_{i=1}^n \lambda_i y_{B_i}^{**} \in C_A$, where $\lambda_1,\ldots, \lambda_n\in [0,1]$, $\sum_{i=1}^n\lambda_i=1$ and $B_1,\ldots, B_n\in \mathcal U$ are contained in $A$.
		Pick $p\in \bigcap\limits_{i=1}^n B_i\in \mathcal U$. Note that $\sum_{i=1}^n \lambda_i x_p^{B_i}\in c_0$, so $\lim_\mathcal U \sum_{i=1}^n \lambda_i x_p^{B_i} =0$ and therefore
		$$\left\vert x\pm\sum_{i=1}^n \lambda_i x_p^{B_i}\right\vert=\max\left\{\left\vert \lim_\mathcal U y \right\vert,\sup_{q\in\mathbb N} \left\vert y(q)\pm\sum_{i=1}^n \lambda_i x_p^{B_i}(q)-\lim_\mathcal U y \right\vert  , \Vert z\Vert\right\}.$$
		
		Let us now estimate
		$\sup_{q\in\mathbb N} \vert y(q)\pm\sum_{i=1}^n \lambda_i x_p^{B_i}(q)-\lim_\mathcal U y\vert$. Given $q\in\mathbb N$, we distinguish two cases. If $q\notin A$ then 
		$$\left\vert y(q)\pm\sum_{i=1}^n \lambda_i x_p^{B_i}(q)-\lim_\mathcal Uy\right\vert=\left\vert y(q)-\lim_\mathcal U(y)\right\vert.$$
		On the other hand, if $q\in A$ then $\vert y(q)-\lim_\mathcal U y\vert<\varepsilon$, so
		\[
		\begin{split}
		\left\vert y(q)\pm\sum_{i=1}^n \lambda_i x_p^{B_i}(q)-\lim_\mathcal U y\right\vert& \leq \left\vert y(q)-\lim_\mathcal U (y)\right\vert+\left\vert \sum_{i=1}^n \lambda_i x_p^{B_i}(q)\right\vert\\
		& \leq \varepsilon+\sum_{i=1}^n \lambda_i=1+\varepsilon.
		\end{split}
		\]
		As a consequence we get that
		\[
		\begin{split}
		\left\vert x\pm\sum_{i=1}^n \lambda_i x_p^{B_i}\right\vert\leq \max\left\{\vert \lim_\mathcal U y\vert, \sup\limits_{q\notin A} \vert y(q)-\lim_\mathcal U y\vert,1+\varepsilon,\|z\|\right\}\leq \max\{\vert x\vert, 1+\varepsilon\}.
		\end{split}
		\]
		Since the norm of $(X,\vert\cdot\vert)^{**}$ is weak*-lower semicontinuous we derive that
		$$\left\vert x\pm\sum_{i=1}^n \lambda_i y_{B_i}^{**}\right\vert\leq \liminf_{p,\mathcal U} \left\vert x+\sum_{i=1}^n \lambda_i x_p^{B_i}\right\vert\leq \max\{\vert x\vert, 1+\varepsilon\}\leq (1+\varepsilon)\max\{|x|,1\}.$$
		Let us now prove the inequality from below. Take a functional $f_p\in (S_{X^*}, |\cdot|)$ with $f_p(e_p)=1$ and observe that, since $(e_n)$ is isometric to the $c_0$-basis, $f_p(e_k)=0$ holds for every $k\neq p$.
		Thus, if $r\geq p$ then $f_p(x_r^{B_i})=1$ and therefore $f_p(\sum_{i=1}^n \lambda_i x_r^{B_i})=1$.

		Since $y_{B_i}^{**}=\lim_{r,\mathcal U} x_r^{B_i}$ we conclude that $f_p(x^{**})=f_p(\sum_{i=1}^n \lambda_i y_{B_i}^{**})=1$ and therefore $|x^{**}|=1$. We have already shown in the first part of the proof that $\vert x\pm x^{**}\vert\leq (1+\varepsilon)\max\{\vert x\vert, 1\}$. We are going to prove now the other inequality $\vert x\pm x^{**}\vert\geq (1-\varepsilon) \max\{\vert x\vert, 1\}$. Suppose first that $\max\{\vert x\vert, 1\}=1$. We know that $f_p(x^{**})=1$. Now
		$$1+\varepsilon\geq \vert x\pm x^{**}\vert\geq f_p(x^{**})\pm f_p(x)\Longrightarrow 1+\varepsilon\geq 1+\vert f_p(x)\vert,$$
		which implies that $\vert f_p(x)\vert\leq \varepsilon$. Then
		$$\vert x\pm x^{**}\vert\geq f_p(x^{**}\pm x)\geq 1-\vert f_p(x)\vert\geq 1-\varepsilon.$$
		If $\max\{\vert x\vert, 1\}=\vert x\vert$, take $g\in (S_{X^*},|\cdot|)$ such that $g(x)=|x|$. Since $$(1+\varepsilon)|x| \geq \vert x\pm x^{**}\vert\geq g(x)\pm g(x^{**})=|x|\pm g(x^{**}),$$ we obtain that $|g(x^{**})|\leq \varepsilon|x|$ and
		$$\vert x\pm x^{**}\vert\geq g(x\pm x^{**})\geq |x|-\vert g(x^{**})\vert\geq |x|-\varepsilon|x| \geq (1-\varepsilon)|x|$$
		as desired.
	\end{proof}

\end{document}